\numberwithin{equation}{section}
\newtheorem{thm}{Theorem}[section]
\newtheorem{lemma}[thm]{Lemma}
\newtheorem{cor}[thm]{Corollary}
\newtheorem{pro}[thm]{Proposition}
\newtheorem{defn}[thm]{Definition}
\title{Further results on the global cyclicity index of graphs}
\author{Yujun Yang$^{1,2}$
\\\small{1. School of Mathematics and Information Science, Yantai University,}
\\\small{Yantai, Shandong, 264005, P.R. China}
\\\small{2. School of Mathematics, Shandong University, Jinan, Shandong, 250010, P.R. China}
\\\small{E-mail address: yangyj@yahoo.com}}
\date{}
\begin{document}
\maketitle \baselineskip 18pt
\begin{abstract}
Being motivated in terms of mathematical concepts from the theory of electrical networks, Klein \& Ivanciuc introduced and studied a new graph-theoretic cyclicity index--the global cyclicity index (Graph cyclicity, excess conductance, and resistance deficit, J. Math. Chem. 30 (2001) 271--287). In this paper, by utilizing techniques from graph theory, electrical network theory and real analysis, we obtain some further results on this new cyclicity measure, including the strictly monotone increasing property, some lower and upper bounds, and some Nordhuas-Gaddum-type results. In particular, we establish a relationship between the global cyclicity index $C(G)$  and the cyclomatic number $\mu(G)$ of a connected graph $G$ with $n$ vertices and $m$ edges:
$$\frac{m}{n-1}\mu(G)\leq C(G)\leq \frac{n}{2}\mu(G).$$
\vspace{0.5cm}

\noindent \textbf{Key words:} resistance distance, global cyclicity index, cyclomatic number, Nordhuas-Gaddum-type result
\vspace{0.5cm}

\end{abstract}


\section{Introduction}

There are different possible measures of ``cyclicity" of a graph $G=(V(G),E(G))$. One simple such
traditional fundamental measure is the cyclomatic number $\mu(G)$ (also called the first Betti number, the nullity or
the cycle rank ) which is defined for a connected graph $G$ with $n$ vertices and $m$ edges as
$$\mu(G)=m-n+1.$$
Motivated from electrical network theory, Klein and Ivanciuc proposed a new cyclicity measure. This new cyclicity measure is established on the basis of the novel concept of resistance distance proposed in the excellent paper by Klein and Randic \cite{kr}. As an intrinsic graph metric, the \textit{resistance distance} on a connected graph $G$ is a distance function $\Omega_G: V(G)\times V(G)\rightarrow \mathbb{R}$ which may be defined \cite{kr} for $i,j\in V$ as the net effective resistance between vertices $i$ $\&$ $j$ when unit resistors are identified with each edge of $G$. Comparing to the traditional (shortest path) distance function $d_G:V(G)\times V(G)\rightarrow \mathbb{N}$, it is well known that $\Omega_G(i,j)$ equals the length $d_G(i,j)$ of the shortest path between $i$ and $j$ iff there is a unique single path between $i$ and $j$, while if there is more than one path, then $\Omega_G(i,j)$ is strictly less than $d_G(i,j)$.  Thence the conductance excess
$\sigma_G(i,j)-1/d_G(i,j)$ indicate in some manner the presence of cyclicity in the portion of the graph
interconnecting $i$ and $j$, where $\sigma_G(i,j)=1/\Omega_G(i,j)$ is known as the effective conductance between $i$ and $j$. To measure the cyclicity of a graph $G$, Klein and Ivanciuc \cite{ki} proposed the \textit{global cyclicity index} $C(G)$ as
\begin{equation}
C(G)=\sum_{i\sim j}\big[\sigma_G(i,j)-1/d_G(i,j)\big],
\end{equation}
where $i\sim j$ means $i$ $\&$ $j$ are adjacent and the sum is over all edges of $G$. Since $d_G(i,j)=1$ for $i\sim j$, $C(G)$ can also be written as
\begin{equation}
C(G)=\sum_{i\sim j}\big[\sigma_G(i,j)-1\big]=\sum_{i\sim j}\sigma_G(i,j)-|E(G)|.
\end{equation}

As a new measure of cyclicity of graphs, the global cyclicity index has less degeneracy than the standard cyclomatic number and has some intuitively appealing features. Since the idea of cyclicity is intimately related to measures of connectivity or complexity \cite{tu} and characterization of ``cyclicity" is an aspect of key importance in the study of molecular graphs \cite{bm,bb}, it is worth studying the global cyclicity index from both mathematical and chemical points of view.

In \cite{ki}, Klein and Ivanciuc established a number of theorems for the global cyclicity index of graphs (even not connected). In \cite{yang}, the present author obtained bounds for the global cyclicity index of fullerene graphs. In this paper, we proceed to study this interesting cyclicity measure. In view of the fact that the global cyclicity index of a disconnected graph is the sum of global cyclicity indices of its components \cite[Theorem F]{ki}, we focus ourselves only on connected graphs. By utilizing techniques from graph theory, electrical network theory and real analysis,  we obtain some further results on the global cyclicity number, including the strictly monotone increasing property, some lower and upper bounds, and some Nordhuas-Gaddum-type results. In particular, we establish a relationship between $C(G)$ and the cyclomatic number $\mu(G)$ of a connected graph $G$ with $n$ vertices and $m$ edges:
$$\frac{m}{n-1}\mu(G)\leq C(G)\leq \frac{n}{2}\mu(G).$$

\section{The strictly monotone increasing property}
In this section, we will prove the \textit{strictly monotone increasing property} (SMI-property for short) of the global cycicity index. For this purpose, we first show if a new edge is added to a graph, then the global cyclicity index will be strictly increased. It turns out that the following recursion formula for resistance distances \cite{yk} plays an essential roll in proving this point.

\begin{thm}\label{recur}\cite[Theorem 2.1]{yk}
Let $\Omega$ and $\Omega'$ be resistance distance functions for edge-weighted connected graphs $G$ and $G'$ which are the same except for the
weights $w$ and $w'$ on an edge $e$ with end vertices $i$ and $j$. Then for any $p,q\in V(G)=V(G')$,
\begin{equation}\label{r'}
\Omega'(p,q)=\Omega(p,q)-\frac{\delta\cdot[\Omega(p,i)+\Omega(q,j)-\Omega(p,j)-\Omega(q,i)]^2}{4[1+\delta\cdot\Omega(i,j)]},
\end{equation}
where $\delta\equiv w'-w$.
\end{thm}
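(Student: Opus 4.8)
The plan is to recognize the weight change as a rank-one perturbation of the graph Laplacian and then to propagate that perturbation through the Moore--Penrose inverse via a Sherman--Morrison--type update. Interpreting the edge weight as conductance, the Laplacians of $G$ and $G'$ differ by $L' = L + \delta\, a a^{T}$, where $a = e_{i} - e_{j}$ and $\delta = w' - w$. I would open by recording the two standard facts I will lean on: the representation $\Omega(p,q) = (e_{p}-e_{q})^{T} L^{+} (e_{p}-e_{q})$, and the entrywise identity $\Omega(x,y) = L^{+}_{xx} + L^{+}_{yy} - 2 L^{+}_{xy}$, the latter being the device that later converts inner products back into resistance distances.

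The central step is to establish the update
\[
(L')^{+} = L^{+} - \frac{\delta\, L^{+} a a^{T} L^{+}}{1 + \delta\, a^{T} L^{+} a}.
\]
This is where I expect the main obstacle to lie: the textbook Sherman--Morrison formula assumes invertibility, while $L$ is singular with null space $\mathrm{span}(\mathbf 1)$. The formula nonetheless survives for the pseudoinverse precisely because $a = e_{i} - e_{j}$ is orthogonal to $\mathbf 1$ and because $G'$ remains connected, so that $L$ and $L'$ share the same null space and column space; I would verify this orthogonality and rank condition explicitly. To sidestep pseudoinverse subtleties entirely, one may instead ground a single vertex, apply ordinary Sherman--Morrison to the resulting invertible reduced Laplacian, and note that resistance distances do not depend on the choice of ground.

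Granting the update, I would evaluate $\Omega'(p,q) = (e_{p}-e_{q})^{T} (L')^{+} (e_{p}-e_{q})$. The denominator is immediate, since $a^{T} L^{+} a = (e_{i}-e_{j})^{T} L^{+} (e_{i}-e_{j}) = \Omega(i,j)$, producing $1 + \delta\, \Omega(i,j)$. For the numerator I must compute the cross term $(e_{p}-e_{q})^{T} L^{+} (e_{i}-e_{j})$; expanding it through the entrywise identity cancels all diagonal contributions and leaves $\tfrac{1}{2}[\Omega(p,j) + \Omega(q,i) - \Omega(p,i) - \Omega(q,j)]$. Squaring this term erases the overall sign, so the bracket may equally be written as $[\Omega(p,i) + \Omega(q,j) - \Omega(p,j) - \Omega(q,i)]^{2}$, the factor $\tfrac14$ emerges from squaring the $\tfrac12$, and reassembling the pieces reproduces \eqref{r'} exactly.

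Finally, I would note that the same identity admits a transparent electrical-network derivation by superposition, likely closer in spirit to the cited source. Changing the weight on $e$ amounts to attaching a branch of conductance $\delta$ across $i$ and $j$; injecting a unit current at $p$ and withdrawing it at $q$, let $x$ be the current this branch carries. Writing Ohm's law for the branch as $x/\delta = (V_{i}-V_{j})$ and computing $V_{i}-V_{j}$ in the base network $G$ under the combined injection $(e_{p}-e_{q}) - x(e_{i}-e_{j})$ yields a single linear equation whose solution is $x = \delta T / (1 + \delta\,\Omega(i,j))$, with $T = \tfrac12[\Omega(p,j)+\Omega(q,i)-\Omega(p,i)-\Omega(q,j)]$. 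Reading off $\Omega'(p,q) = V_{p}-V_{q} = \Omega(p,q) - xT$ then delivers the formula; here the only delicate point is the sign bookkeeping when folding the branch current back into $G$ as an auxiliary current source.
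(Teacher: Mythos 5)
Your proposal is correct. Note, however, that the paper itself offers no proof of this statement: Theorem \ref{recur} is imported verbatim from the cited reference \cite{yk} and used as a black box, so there is no in-paper argument to compare yours against. As a self-contained derivation, your route is sound: the weight change is indeed the rank-one Laplacian perturbation $L'=L+\delta\,aa^{T}$ with $a=e_i-e_j$, and the Sherman--Morrison update for the Moore--Penrose inverse is legitimate here for exactly the reasons you flag --- $a\perp\mathbf{1}$, both $G$ and $G'$ connected so that $L$ and $L'$ share null space $\mathrm{span}(\mathbf{1})$, and $1+\delta\,a^{T}L^{+}a=1+\delta\,\Omega(i,j)\neq 0$ (one can check directly that the proposed matrix $M$ satisfies $ML'=L'M=I-\tfrac1n\mathbf{1}\mathbf{1}^{T}$ and $M\mathbf{1}=0$, which characterizes $(L')^{+}$). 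The conversion of the cross term via $L^{+}_{xy}=\tfrac12\bigl(L^{+}_{xx}+L^{+}_{yy}-\Omega(x,y)\bigr)$ gives $(e_p-e_q)^{T}L^{+}(e_i-e_j)=\tfrac12[\Omega(p,j)+\Omega(q,i)-\Omega(p,i)-\Omega(q,j)]$, whose square supplies the factor $\tfrac14$ and the bracket in \eqref{r'}, so the bookkeeping closes. Your alternative superposition argument also checks out: with branch current $x$ from $i$ to $j$, the base network sees injection $(e_p-e_q)-x(e_i-e_j)$, Ohm's law gives $x=\delta T/(1+\delta\,\Omega(i,j))$, and $\Omega'(p,q)=\Omega(p,q)-xT$. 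The only caveat worth recording explicitly in a final write-up is the nondegeneracy $1+\delta\,\Omega(i,j)>0$, which is where connectivity of $G'$ (when $\delta<0$) actually enters.
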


Theorem \ref{recur} is applicable to more general edge-weighted graphs and the weight on each edge represents its conductance (the reciprocal of resistance). In this paper, we only consider simple graphs of unit weight on each edge. Especially, it should be pointed out the that if two vertices $i$ and $j$ are not adjacent, then it is equivalent to say that they are connected by an edge of weight 0.

Applying Theorem \ref{recur} to the global cyclicity index, we have the following result.
\begin{thm}\label{diff}
Let $G$ be a connected graph such that $i$ and $j$ are not adjacent in $G$. Denote by $G+ij$ the graph obtained from $G$ by adding a new edge $ij$. Then
\begin{align}\label{dif}
&C(G+ij)-C(G)=\frac{1}{\Omega_G(i,j)}+\notag\\
&\sum_{pq\in E(G)}\frac{[\Omega_G(p,i)+\Omega_G(q,j)-\Omega_G(p,j)-\Omega_G(q,i)]^2}{4\Omega^2_G(p,q)[1+\Omega_G(i,j)]-\Omega_G(p,q)[\Omega_G(p,i)+\Omega_G(q,j)-\Omega_G(p,j)-\Omega_G(q,i)]^2}
\end{align}
\end{thm}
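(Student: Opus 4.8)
The plan is to expand the difference straight from the definition of $C$ and then substitute the recursion formula \eqref{r'} of Theorem \ref{recur}. Using $C(G)=\sum_{pq\in E(G)}1/\Omega_G(p,q)-|E(G)|$ together with $E(G+ij)=E(G)\cup\{ij\}$, so that $|E(G+ij)|=|E(G)|+1$, I would first isolate the new edge from the sum over $E(G+ij)$ and record
\[
C(G+ij)-C(G)=\frac{1}{\Omega_{G+ij}(i,j)}+\sum_{pq\in E(G)}\left[\frac{1}{\Omega_{G+ij}(p,q)}-\frac{1}{\Omega_G(p,q)}\right]-1,
\]
where the lone $-1$ comes precisely from the increase in the edge count.

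The essential input is Theorem \ref{recur}. Since $i$ and $j$ are non-adjacent in $G$, forming $G+ij$ means raising the conductance on the edge $ij$ from $w=0$ to $w'=1$, so $\delta=w'-w=1$. Applying \eqref{r'} with $\delta=1$ gives, for every pair $p,q$,
\[
\Omega_{G+ij}(p,q)=\Omega_G(p,q)-\frac{[\Omega_G(p,i)+\Omega_G(q,j)-\Omega_G(p,j)-\Omega_G(q,i)]^2}{4[1+\Omega_G(i,j)]}.
\]

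I would then handle the two pieces separately. For the new-edge term I set $p=i$, $q=j$; using $\Omega_G(i,i)=\Omega_G(j,j)=0$ the bracket collapses to $-2\Omega_G(i,j)$, whose square is $4\Omega_G^2(i,j)$, and a short simplification yields $\Omega_{G+ij}(i,j)=\Omega_G(i,j)/[1+\Omega_G(i,j)]$, hence $1/\Omega_{G+ij}(i,j)=1+1/\Omega_G(i,j)$. The constant $1$ produced here will cancel the $-1$ coming from the edge count; this bookkeeping cancellation is the one step to watch. For each old edge $pq\in E(G)$, writing $a=\Omega_G(p,q)$ and letting $b$ denote the subtracted fraction above, I have $1/\Omega_{G+ij}(p,q)-1/\Omega_G(p,q)=1/(a-b)-1/a=b/[a(a-b)]$; clearing the common factor $4[1+\Omega_G(i,j)]$ from $b$ turns this into exactly the summand displayed in \eqref{dif}.

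Assembling the three contributions and cancelling the $\pm 1$ gives the claimed identity. I do not expect any serious obstacle: the argument is a straightforward expansion followed by routine algebra, the only mild subtleties being the correct reading $\delta=1$ for adding an edge of unit conductance and carefully tracking the constant term that cancels against the change in the number of edges.
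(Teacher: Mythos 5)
Your proposal is correct and follows essentially the same route as the paper: isolate the new-edge term from the definition of $C$, apply Theorem \ref{recur} with $\delta=1$ to get both $\Omega_{G+ij}(i,j)=\Omega_G(i,j)/[1+\Omega_G(i,j)]$ and the updated resistances on the old edges, and substitute. Your write-up actually carries out the final algebraic simplification (the $b/[a(a-b)]$ step and the $\pm 1$ cancellation) more explicitly than the paper, which simply states that substitution yields the result.
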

\begin{proof}
\begin{align}\label{e0}
C(G+ij)-C(G)&=\sum\limits_{pq\in E(G+ij)}\sigma_{G+ij}(p,q)-\sum\limits_{pq\in E(G)}\sigma_{G}(p,q)\notag\\
&=\sum\limits_{pq\in E(G+ij)}\big[\frac{1}{\Omega_{G+ij}(p,q)}-1\big]-\sum\limits_{pq\in E(G)}\big[\frac{1}{\Omega_{G}(p,q)}-1\big]\notag\\
&=\frac{1}{\Omega_{G+ij}(i,j)}-1+\sum\limits_{pq\in E(G)}\big[\frac{1}{\Omega_{G+ij}(p,q)}-\frac{1}{\Omega_{G}(p,q)}\big].
\end{align}
Noticing that the weights on edge $e=ij$ in $G+ij$ and $G$ are $1$ and 0, respectively, hence $\delta=w'-w=1-0=1$ and by Theorem \ref{recur}, we have
\begin{equation}\label{e1}
\Omega_{G+ij}(i,j)=\frac{\Omega_{G}(i,j)}{1+\Omega_{G}(i,j)},
\end{equation}
\begin{equation}\label{e2}
\Omega_{G+ij}(p,q)=\Omega_G(p,q)-\frac{[\Omega_G(p,i)+\Omega_G(q,j)-\Omega_G(p,j)-\Omega_G(q,i)]^2}{4[1+\Omega_G(i,j)]}.
\end{equation}
Then Theorem \ref{diff} is obtained by substituting Eqs. (\ref{e1}) and (\ref{e2}) into Eq. (\ref{e0}).
\end{proof}

Furthermore, the difference $C(G+ij)-C(G)$ may be bounded in terms of $\Omega_G(i,j)$ as given in the following proposition. To obtain the proposition, we will use the famous Foster's (first) formula \cite{fo} which demonstrates that the sum of resistance distances between all pairs of adjacent vertices of a connected graph $G$ with $n$ vertices is equal to $n-1$, that is,
\begin{equation}
\sum_{i\sim j}\Omega_G(i,j)=n-1.
\end{equation}
\begin{pro} Let $G$ be a connected graph with $m$ edges. If $i,j\in V(G)$ are not adjacent in $G$, then
\begin{equation}
\frac{1}{\Omega_G(i,j)}<C(G+ij)-C(G)\leq \frac{m+1}{\Omega_G(i,j)},
\end{equation}
with equality if and only if $G$ is the path graph $P_n$ with $i$ and $j$ being its two end vertices.
\end{pro}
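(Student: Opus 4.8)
The plan is to start from the exact difference formula in Theorem \ref{diff} and analyze its sum term by term. Writing $A_{pq}=\Omega_G(p,i)+\Omega_G(q,j)-\Omega_G(p,j)-\Omega_G(q,i)$ and factoring $\Omega_G(p,q)$ out of the denominator, each summand takes the form
$$\frac{A_{pq}^2}{\Omega_G(p,q)\bigl(4\Omega_G(p,q)[1+\Omega_G(i,j)]-A_{pq}^2\bigr)}.$$
The first observation is that every summand is nonnegative: $\Omega_G(p,q)>0$ for $p\neq q$, and the bracketed factor is positive because Eq.~(\ref{e2}) exhibits it, up to the positive constant $4[1+\Omega_G(i,j)]$, as $\Omega_{G+ij}(p,q)$, which is a genuine resistance distance between distinct vertices and hence strictly positive. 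This already yields $C(G+ij)-C(G)\geq 1/\Omega_G(i,j)$. To upgrade this to a strict inequality I would show the sum cannot vanish, i.e.\ the numerators $A_{pq}$ are not all zero. Setting $f(v)=\Omega_G(v,i)-\Omega_G(v,j)$ one has $A_{pq}=f(p)-f(q)$, so if all $A_{pq}=0$ then $f$ is constant on the connected graph $G$; evaluating at $v=i$ and $v=j$ forces $\Omega_G(i,j)=-\Omega_G(i,j)$, contradicting $\Omega_G(i,j)>0$. Hence at least one term is positive and the left inequality is strict.

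For the upper bound I would bound each summand separately. Fixing $a=\Omega_G(p,q)$ and $b=1+\Omega_G(i,j)$, the map $g(x)=x/\bigl(a(4ab-x)\bigr)$ is strictly increasing on $[0,4ab)$, since its derivative equals $4a^2b/[a(4ab-x)]^2>0$. Because the resistance distance is a metric, the reverse triangle inequality gives $|\Omega_G(p,i)-\Omega_G(q,i)|\le\Omega_G(p,q)$ and $|\Omega_G(q,j)-\Omega_G(p,j)|\le\Omega_G(p,q)$, whence $A_{pq}^2\le 4\Omega_G^2(p,q)=4a^2$. As $a\le 1<b$ we have $4a^2<4ab$, so monotonicity of $g$ yields
$$\frac{A_{pq}^2}{\Omega_G(p,q)\bigl(4\Omega_G(p,q)[1+\Omega_G(i,j)]-A_{pq}^2\bigr)}=g(A_{pq}^2)\le g(4a^2)=\frac{1}{1+\Omega_G(i,j)-\Omega_G(p,q)}.$$
Since $pq$ is an edge, the direct unit resistor in parallel with the remaining network gives $\Omega_G(p,q)\le 1$, so the right side is at most $1/\Omega_G(i,j)$. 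Summing over the $m$ edges bounds the sum by $m/\Omega_G(i,j)$, and adding the leading term $1/\Omega_G(i,j)$ gives $C(G+ij)-C(G)\le (m+1)/\Omega_G(i,j)$.

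Finally I would chase the equality case, which requires equality in each of the two estimates above for \emph{every} edge $pq$. Strict monotonicity of $g$ forces $A_{pq}^2=4\Omega_G^2(p,q)$, and the bound $\Omega_G(p,q)\le 1$ must be tight, i.e.\ $\Omega_G(p,q)=1$. The latter says every edge is a bridge, so $G$ is a tree and $\Omega_G=d_G$. On a tree the first condition $|A_{pq}|=2\Omega_G(p,q)=2$ is, after writing $d(p,i)-d(q,i)=\pm1$ and $d(p,j)-d(q,j)=\pm1$ according to which component of $G-pq$ contains $i$ and $j$, equivalent to $i$ and $j$ lying in \emph{different} components of $G-pq$; that is, every edge lies on the $i$--$j$ path. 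Hence $G$ is exactly that path and $i,j$ are its endpoints, so $G\cong P_n$; the converse is a direct check, since on $P_n$ every edge separates the endpoints and every summand equals $1/\Omega_G(i,j)$.

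The main obstacle I anticipate is the upper bound: one must combine the per-edge monotonicity bound with the two independent tightness inputs $A_{pq}^2\le 4\Omega_G^2(p,q)$ and $\Omega_G(p,q)\le 1$ so that the slack is controlled cleanly, and then extract the sharp ``$G\cong P_n$'' characterization from the requirement that both become equalities simultaneously across all edges.
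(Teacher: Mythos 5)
Your proof is correct, and it follows the paper's overall strategy for the upper bound while diverging in one genuinely different place: the strictness of the lower bound. The paper proves strictness via Foster's first formula --- after adding the edge $ij$, the sum $\sum_{pq\in E(G+ij)}\Omega_{G+ij}(p,q)$ is again $n-1$, so $\sum_{pq\in E(G)}\Omega_{G+ij}(p,q)=n-1-\Omega_{G+ij}(i,j)<n-1=\sum_{pq\in E(G)}\Omega_G(p,q)$, forcing some edge resistance to strictly decrease. You instead argue directly on the numerators: setting $f(v)=\Omega_G(v,i)-\Omega_G(v,j)$, if every $A_{pq}=f(p)-f(q)$ vanished then $f$ would be constant on the connected graph, yet $f(i)=-\Omega_G(i,j)$ and $f(j)=\Omega_G(i,j)$, a contradiction. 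Both arguments are valid; yours is self-contained (no appeal to Foster's formula) and localizes exactly which terms of the sum in Eq.~(\ref{dif}) are positive, while the paper's is shorter given that Foster's formula is already quoted. For the upper bound you reproduce the paper's two estimates ($A_{pq}^2\leq 4\Omega_G^2(p,q)$ from the triangle inequality, and $\Omega_G(p,q)\leq 1$ on edges), but you add a justification the paper leaves implicit: substituting the bound on $A_{pq}^2$ simultaneously into numerator and denominator is legitimate because $x\mapsto x/\bigl(a(4ab-x)\bigr)$ is increasing on $[0,4ab)$ and $4a^2<4ab$. Your equality analysis is also more complete than the paper's, which merely asserts that the conditions force $G\cong P_n$ ``it is not hard to show''; your reduction (every edge has resistance $1$, hence is a bridge, hence $G$ is a tree; then $|A_{pq}|=2$ forces every edge to separate $i$ from $j$) fills that gap cleanly. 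No errors found.
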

\begin{proof}
It is obvious that $C(G+ij)-C(G)\geq \frac{1}{\Omega_G(i,j)}$, with equality if and only if  $\Omega_{G+ij}(p,q)=\Omega_G(p,q)$ holds for each $pq\in E(G)$. However, on one hand, by Eq. (\ref{e2}), we have $\Omega_{G+ij}(p,q)\leq \Omega_G(p,q)$, on the other hand, by Foster's first formula,
\begin{align*}
&\sum_{pq\in E(G)}\Omega_{G+ij}(p,q)=\sum_{pq\in E(G+ij)}\Omega_{G+ij}(p,q)-\Omega_{G+ij}(i,j)=n-1-\Omega_{G+ij}(i,j)\\
<&\sum_{pq\in E(G)}\Omega_{G}(p,q)=n-1.
\end{align*}
Hence we can see that there must exist some $pq\in E(G)$ such that $\Omega_{G+ij}(p,q)< \Omega_G(p,q)$, and consequently,
$$C(G+ij)-C(G)> \frac{1}{\Omega_G(i,j)}.$$

For the upper bound, by the triangle inequality on resistance distances, for any two vertices $p,q\in V(G)$, we have
$|\Omega_G(p,i)-\Omega_G(q,i)|\leq \Omega_G(p,q)$ and $|\Omega_G(p,j)-\Omega_G(q,j)|\leq \Omega_G(p,q)$. Hence
$$[\Omega_G(p,i)+\Omega_G(q,j)-\Omega_G(p,j)-\Omega_G(q,i)]^2\leq 4\Omega^2_G(p,q).$$
Substituting the above inequality into Eq. (\ref{dif}), we have
\begin{align*}
C(G+ij)-C(G)&\leq \frac{1}{\Omega_G(i,j)}+\sum_{pq\in E(G)}\frac{4\Omega^2_G(p,q)}{4\Omega^2_G(p,q)[1+\Omega_G(i,j)]-\Omega_G(p,q)\times4\Omega^2_G(p,q)}\\
&=\frac{1}{\Omega_G(i,j)}+\sum_{pq\in E(G)}\frac{1}{1+\Omega_G(i,j)-\Omega_G(p,q)}\\
&\leq \frac{1}{\Omega_G(i,j)}+\sum_{pq\in E(G)}\frac{1}{\Omega_G(i,j)} ~~~(\mbox{by} ~~1-\Omega_G(p,q)\geq 0)\\
&=\frac{m+1}{\Omega_G(i,j)}.
\end{align*}
Note that equality holds if and only if for any edge $pq\in E(G)$, $1-\Omega_G(p,q)=0$, $|\Omega_G(p,i)-\Omega_G(q,i)|=\Omega_G(p,q)$ and $|\Omega_G(p,j)-\Omega_G(q,j)|=\Omega_G(p,q)$. It indicates that $pq$ is an cut edge of $G$, either every path from $p$ to $i$ passes through $q$ or every path from $q$ to $i$ passes through $p$, and either every path from $p$ to $j$ passes through $q$ or every path from $q$ to $j$ passes through $p$. It is not hard to show that the above conditions are satisfied if and only if $G$ is a path of length $n-1$ and $i$ $\&$ $j$ are two end vertices of it.
\end{proof}

According to Theorem \ref{diff}, one could easily obtain the strictly monotone increasing property.

\begin{pro}\label{sm}(SMI-property)
Let $G$ be a connected graph with $n$ vertices and let $H$ be a connected
spanning subgraph of $G$ such that $G\neq H$. Then $$C(H)<C(G).$$
\end{pro}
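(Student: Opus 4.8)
The plan is to reduce the statement to repeated application of the single-edge-addition result already established. Since $H$ is a connected spanning subgraph of $G$ with $H \neq G$, the two graphs share the same vertex set $V(H)=V(G)$, and $E(H)$ is a proper subset of $E(G)$. I would write the edges of $E(G)\setminus E(H)$ as $e_1,e_2,\ldots,e_k$ with $k\geq 1$, and build a chain of graphs
$$H=H_0\subsetneq H_1\subsetneq\cdots\subsetneq H_k=G,\qquad H_t=H_{t-1}+e_t,$$
obtained by inserting one missing edge at a time. Each $H_t$ has the same vertex set as $H$ and is connected, because $H_0=H$ is connected and adjoining edges to a connected graph keeps it connected; in particular the resistance distances on every $H_t$ are well defined.

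First I would verify that Theorem \ref{diff} (equivalently, the preceding proposition) applies at each step. Writing $e_t=i_tj_t$, the endpoints $i_t$ and $j_t$ are non-adjacent in $H_{t-1}$: since all graphs involved are simple and $e_t\in E(G)\setminus E(H_{t-1})$, there is no edge joining $i_t$ and $j_t$ in $H_{t-1}$. Moreover $i_t\neq j_t$ and $H_{t-1}$ is connected, so $\Omega_{H_{t-1}}(i_t,j_t)$ is positive and finite. Hence the hypotheses of Theorem \ref{diff} are met with $G$ replaced by $H_{t-1}$ and $G+ij$ replaced by $H_t$.

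The engine of the argument is strict positivity of each increment. By Theorem \ref{diff} the quantity $C(H_t)-C(H_{t-1})$ equals $1/\Omega_{H_{t-1}}(i_t,j_t)$ plus a sum of nonnegative terms, and in fact the preceding proposition already records the sharper bound
$$C(H_t)-C(H_{t-1})>\frac{1}{\Omega_{H_{t-1}}(i_t,j_t)}>0.$$
Telescoping over $t=1,\ldots,k$ then gives
$$C(G)-C(H)=\sum_{t=1}^{k}\big[C(H_t)-C(H_{t-1})\big]>0,$$
which is exactly $C(H)<C(G)$. Equivalently, one may phrase this as an induction on $k=|E(G)|-|E(H)|$, the base case $k=1$ being the preceding proposition and the inductive step adjoining one edge.

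The argument is essentially mechanical once the single-edge result is in hand, so I do not expect a deep obstacle; the only points requiring care are bookkeeping ones. The main thing to check is that connectivity is preserved all along the chain (needed so that resistance distance, and therefore $C$, is defined at each stage) and that at every step the freshly added edge genuinely joins a non-adjacent pair in the current graph; both follow from $H$ being connected and all graphs being simple. I would also note that the order in which the missing edges $e_1,\ldots,e_k$ are inserted is irrelevant, since each increment is strictly positive regardless of the order.
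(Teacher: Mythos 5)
Your proposal is correct and is exactly the argument the paper intends: the paper simply asserts that the SMI-property follows from Theorem \ref{diff}, i.e.\ by adding the missing edges one at a time and noting each addition strictly increases $C$. Your telescoping/induction write-up, with the connectivity and non-adjacency bookkeeping, fills in precisely those routine details.
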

\section{Some lower and upper bounds}
\subsection{Bounds via the SMI-property}
For convenience, we first compute the global cyclicity index of some special graphs. Let $T_n$, $C_n$, $K_n$ and $K_{n_1,n_2}$ denote a tree with $n$ vertices, the cycle graph with $n$ vertices, the complete graph with $n$ vertices, and the complete bipartite graph with partitions of size $n_1$ and $n_2$, respectively. It has been shown in \cite{ki} that $C(T_n)=0$ and $C(C_n)=\frac{n}{n-1}$. For $K_n$, since the resistance distance between every adjacent vertices is $\frac{2}{n}$, it is easily obtained that
\begin{equation}
C(K_n)=\frac{n(n-1)(n-2)}{4}.
\end{equation}
For $K_{n_1,n_2}$, as it has been shown that the resistance distance between adjacent vertices is $\frac{n_1+n_2-1}{n_1n_2}$, simple calculation leads to
\begin{equation}
C(K_{n_1,n_2})=\frac{n_1n_2(n_1n_2-n_1-n_2+1)}{n_1+n_2-1}.
\end{equation}

By the strictly monotone increasing property, we could establish bounds on the global cyclicity number of some classes of graphs.
\begin{thm}
For a connected graph $G$ with $n$ vertices, we have
\begin{equation}
0\leq C(G)\leq \frac{n(n-1)(n-2)}{4}.
\end{equation}
The first equality holds if and only if $G$ is a tree, and the second does if and only if $G=K_n$.
\end{thm}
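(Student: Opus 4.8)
The plan is to prove both the bounds and their equality cases by leaning entirely on the strictly monotone increasing property (Proposition \ref{sm}) together with the explicit value $C(K_n)=\frac{n(n-1)(n-2)}{4}$ already computed above. The strategy is to sandwich an arbitrary connected graph $G$ on $n$ vertices between two canonical graphs on the same vertex set: a spanning tree from below and the complete graph $K_n$ from above.

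For the lower bound, first I would observe that every connected graph $G$ on $n$ vertices contains a spanning tree $T_n$ as a connected spanning subgraph. Since $C(T_n)=0$ (as recalled from \cite{ki}), and since the global cyclicity index is manifestly nonnegative—each summand $\sigma_G(i,j)-1=\frac{1}{\Omega_G(i,j)}-1$ is nonnegative because $\Omega_G(i,j)\le d_G(i,j)=1$ for adjacent $i,j$—we immediately get $C(G)\ge 0$. For the equality case $C(G)=0$: this forces $\Omega_G(i,j)=1$ for every edge $ij$, meaning each edge is the unique path between its endpoints, i.e.\ every edge is a cut edge and $G$ is a tree. Alternatively, and more in the spirit of the SMI-property, if $G$ is not a tree it strictly contains one of its spanning trees $T_n$ as a proper connected spanning subgraph, so $C(G)>C(T_n)=0$ by Proposition \ref{sm}.

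For the upper bound, the key structural fact is that any connected graph $G$ on $n$ vertices is a spanning subgraph of the complete graph $K_n$ on the same $n$ vertices. Applying Proposition \ref{sm} with $H=G$ and the ambient graph $K_n$ gives $C(G)\le C(K_n)=\frac{n(n-1)(n-2)}{4}$, with strict inequality $C(G)<C(K_n)$ whenever $G\neq K_n$ (since then $G$ is a \emph{proper} connected spanning subgraph of $K_n$, and the SMI-property is strict). This simultaneously delivers the bound and the characterization of equality: $C(G)=\frac{n(n-1)(n-2)}{4}$ holds if and only if $G=K_n$.

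The main subtlety, rather than any hard computation, is the careful bookkeeping of the equality conditions so that they match exactly the claim. For the upper bound this is automatic from the strictness clause in Proposition \ref{sm}, so the only point deserving attention is the lower-bound equality case: one must confirm that $C(G)=0$ genuinely characterizes trees and not merely implies $G$ contains no ``effective'' cycle. Here I would argue directly that $C(G)=0$ iff every summand vanishes iff $\Omega_G(i,j)=1$ for all edges, which by the standard fact recalled in the introduction (resistance distance equals graph distance precisely when the connecting path is unique) means no edge lies on a cycle, hence $\mu(G)=0$ and $G$ is a tree. Both directions of each equivalence are then immediate, completing the proof.
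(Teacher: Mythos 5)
Your proposal is correct and matches the paper's intended argument: the paper derives this theorem directly from the SMI-property (Proposition \ref{sm}) by sandwiching $G$ between a spanning tree and $K_n$, exactly as you do. Your extra direct verification of the lower-bound equality case (each summand $\sigma_G(i,j)-1$ vanishes iff every edge is a cut edge) is a harmless and correct addition.
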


\begin{thm}
For a connected bipartite graph $G$ with partitions of size $n_1$ and $n_2$, we have
\begin{equation}
0\leq C(G)\leq \frac{n_1n_2(n_1n_2-n_1-n_2+1)}{n_1+n_2-1}.
\end{equation}
The first equality holds if and only if $G$ is a tree, and the second does if and only if $G=K_{n_1,n_2}$.
\end{thm}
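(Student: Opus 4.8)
The plan is to mirror the proof of the preceding theorem on general graphs, relying on the SMI-property (Proposition \ref{sm}) together with the two reference values already in hand: $C(T_n)=0$ for trees and $C(K_{n_1,n_2})=\frac{n_1n_2(n_1n_2-n_1-n_2+1)}{n_1+n_2-1}$. The essential structural observation is that, in the spanning-subgraph order, every connected bipartite graph $G$ with parts of sizes $n_1$ and $n_2$ is sandwiched between a spanning tree (at the bottom) and the complete bipartite graph $K_{n_1,n_2}$ (at the top). Both bounds then drop out by monotonicity.

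For the lower bound I would use that connectedness of $G$ guarantees a spanning tree $T_n$ which is a connected spanning subgraph of $G$. If $G$ is itself a tree, then $C(G)=C(T_n)=0$; otherwise $T_n\neq G$, and Proposition \ref{sm} yields $0=C(T_n)<C(G)$. This simultaneously establishes $C(G)\geq 0$ and the claim that equality holds exactly when $G$ is a tree.

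For the upper bound I would observe that, since every edge of a bipartite graph joins the two parts, $G$ is a subgraph of $K_{n_1,n_2}$ on the same bipartition, and the two graphs share a vertex set, so $G$ is a connected spanning subgraph of $K_{n_1,n_2}$. If $G=K_{n_1,n_2}$ then $C(G)=\frac{n_1n_2(n_1n_2-n_1-n_2+1)}{n_1+n_2-1}$; otherwise $G\neq K_{n_1,n_2}$ and Proposition \ref{sm} gives $C(G)<C(K_{n_1,n_2})=\frac{n_1n_2(n_1n_2-n_1-n_2+1)}{n_1+n_2-1}$. This delivers both the upper bound and its equality characterization.

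No analytic difficulty arises here; once the two sandwiching inclusions are in place, the argument is purely order-theoretic. The only point deserving a moment's care is that the bipartition of a connected bipartite graph is unique up to swapping the two parts, so that $n_1$ and $n_2$ are unambiguously determined and $K_{n_1,n_2}$ is genuinely the maximal bipartite graph on this bipartition containing $G$; this legitimizes the containment $G\subseteq K_{n_1,n_2}$ on which the upper bound rests.
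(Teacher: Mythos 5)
Your proof is correct and follows exactly the route the paper intends: the paper states this theorem without a written proof, deriving it directly from the SMI-property (Proposition \ref{sm}) together with the precomputed values $C(T_n)=0$ and $C(K_{n_1,n_2})$, which is precisely your sandwiching argument. Nothing is missing.
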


Now we turn to another interesting class of graphs--circulant graphs. A circulant graph is a graph of $n$ vertices in which the $i$th vertex is adjacent to the $(i+j)$th and $(i-j)$th vertices for each $j$ in a list $l$. Since any circulant graph of order $n$ has a hamilton cycle $C_n$ \cite{ma} and it is a subgraph of $K_n$, by Proposition \ref{sm}, we have

\begin{thm}
For a connected circulant graph $G$ with $n$ vertices, we have
\begin{equation}
\frac{n}{n-1}\leq C(G)\leq \frac{n(n-1)(n-2)}{4}.
\end{equation}
The first equality holds if and only if $G=C_n$, and the second does if and only if $G=K_n$.
\end{thm}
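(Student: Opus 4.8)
The plan is to derive both bounds as immediate consequences of the SMI-property (Proposition \ref{sm}), using the two structural facts already noted: every connected circulant graph on $n$ vertices contains a Hamilton cycle $C_n$ as a spanning subgraph \cite{ma}, and every graph on $n$ vertices is a spanning subgraph of $K_n$. The previously computed values $C(C_n)=\frac{n}{n-1}$ and $C(K_n)=\frac{n(n-1)(n-2)}{4}$ then supply the two extremal constants, so no new resistance-distance computation is required.

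For the lower bound, I would observe that the Hamilton cycle $C_n$ guaranteed by \cite{ma} is a connected spanning subgraph of $G$. If $G=C_n$, then $C(G)=\frac{n}{n-1}$ directly. Otherwise $C_n$ is a \emph{proper} connected spanning subgraph of $G$, so the SMI-property yields $C(C_n)<C(G)$, i.e. $\frac{n}{n-1}<C(G)$. Combining the two cases gives $\frac{n}{n-1}\leq C(G)$ with equality precisely when $G=C_n$.

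For the upper bound, I would note that $G$, being a connected graph on $n$ vertices, is a connected spanning subgraph of $K_n$. If $G=K_n$ then $C(G)=\frac{n(n-1)(n-2)}{4}$; otherwise $G$ is a proper connected spanning subgraph of $K_n$, and the SMI-property gives $C(G)<C(K_n)=\frac{n(n-1)(n-2)}{4}$. Hence $C(G)\leq\frac{n(n-1)(n-2)}{4}$ with equality exactly when $G=K_n$.

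The argument is essentially a packaging of Proposition \ref{sm}, so I do not anticipate a substantive obstacle; the only points requiring a word of care are the sharpness assertions. Both extremal graphs lie within the class under consideration -- $C_n$ is circulant with list $\{1\}$ and $K_n$ is circulant with list $\{1,2,\ldots,\lfloor n/2\rfloor\}$ -- so neither bound is vacuous and the equality characterizations inherited from the SMI-property are genuinely attained inside the family of connected circulant graphs. The real content was already invested in establishing the strict monotonicity of Proposition \ref{sm}, which here reduces the theorem to correctly identifying the spanning sub- and super-graph ($C_n$ below, $K_n$ above).
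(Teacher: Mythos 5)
Your proof is correct and is essentially identical to the paper's argument: the paper likewise deduces both bounds from Proposition \ref{sm} by noting that a connected circulant graph contains a Hamilton cycle $C_n$ as a spanning subgraph and is itself a spanning subgraph of $K_n$, with the equality characterizations following from the strictness of the SMI-property. No discrepancy to report.
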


\subsection{Bound via an inequality for resistance distances}

We first give a lower bound for resistance distances between adjacent vertices. To obtain the lower bound, we will use a classical result in electrical network theory--Rayleigh's short-cut method \cite{ds}: Shorting certain sets of vertices together can only decrease the resistance distance of the network between two given vertices. Cutting certain edges
can only increase the resistance distance between two given vertices.

Let $G$ be a connected graph. For $v\in V(G)$, denote the degree of $v$ by $d_G(v)$. Then
\begin{lemma}\label{ouv}
The resistance distance between any two adjacent vertices $u,v\in V(G)$ satisfies:
\begin{equation}\label{e3}
\Omega_G(u,v)\geq \frac{d_G(u)+d_G(v)-2}{d_G(u)d_G(v)-1},
\end{equation}
with equality if and only if $d_G(u)=d_G(v)$ and there are $d_G(u)-1$ vertices that are adjacent both to $u$ and to $v$.
\end{lemma}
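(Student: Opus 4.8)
The plan is to obtain the lower bound by a single application of Rayleigh's short-cut method, reducing $G$ to a three-node network whose two-terminal resistance can be computed directly. Write $a=d_G(u)$ and $b=d_G(v)$, and note that since $u\sim v$, apart from the edge $uv$ the vertex $u$ has exactly $a-1$ further neighbours and $v$ has exactly $b-1$ further neighbours. First I would short together all vertices of $G$ other than $u$ and $v$ into a single node $w$; by Rayleigh's short-cut method this can only decrease the effective resistance, so $\Omega_G(u,v)\ge \Omega_{G'}(u,v)$, where $G'$ is the resulting multigraph. In $G'$ the edge $uv$ survives with unit conductance, the $a-1$ edges joining $u$ to its other neighbours collapse into a single $u$--$w$ branch of conductance $a-1$, the $b-1$ edges at $v$ collapse into a $v$--$w$ branch of conductance $b-1$, and every edge internal to the shorted set becomes a loop at $w$ and may be discarded. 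Thus $G'$ is a direct $u$--$v$ branch of resistance $1$ in parallel with the series path $u$--$w$--$v$ of resistance $\tfrac{1}{a-1}+\tfrac{1}{b-1}=\tfrac{a+b-2}{(a-1)(b-1)}$; combining the two branches in parallel and using $(a-1)(b-1)+(a+b-2)=ab-1$ gives $\Omega_{G'}(u,v)=\tfrac{a+b-2}{ab-1}$, which is exactly the claimed bound.

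For the equality discussion I would invoke the standard sharpness condition for shorting: identifying a set $S$ of vertices leaves $\Omega_G(u,v)$ unchanged precisely when all vertices of $S$ sit at a common potential in the unit $u$-to-$v$ current flow; here $S=V(G)\setminus\{u,v\}$. Suppose equality holds, so every such vertex is at one potential $p$ and no current flows along any edge internal to $S$. Kirchhoff's current law at a neighbour $x\neq v$ of $u$ then shows that $x$ receives current from $u$ which, having no internal flow available, must discharge directly into $v$, so $x$ is adjacent to $v$ as well; running the symmetric argument from $v$ forces $N_G(u)\setminus\{v\}=N_G(v)\setminus\{u\}$, whence $d_G(u)=d_G(v)$ and $u,v$ share $a-1$ common neighbours. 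Conversely, if $N_G(u)\setminus\{v\}=N_G(v)\setminus\{u\}$ with $a=b$, the transposition swapping $u$ and $v$ is a graph automorphism, so every common neighbour is pinned to the midpoint potential $\tfrac{\phi_u+\phi_v}{2}$; a short harmonicity argument propagates this value to the remaining vertices of $S$, the shorting changes nothing, and equality holds.

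The resistance computation in $G'$ is routine; the step that needs genuine care is the equality characterization, specifically the current-balance argument ruling out a one-sided neighbour of $u$ once $S$ is equipotential, together with the converse potential-propagation argument. I would also watch the degenerate low-degree configurations: when $a=1$ or $b=1$ one has $(a-1)(b-1)=0$, the bound already equals $1$, and a pendant edge attains equality without the common-neighbour structure, so the equality condition must be read with the understanding that $u$ and $v$ are not pendant vertices.
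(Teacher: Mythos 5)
Your lower-bound argument is essentially the paper's: both reduce $G$ to a three-node network by Rayleigh shorting. The paper first deletes the edge $uv$, shorts $V(G)\setminus\{u,v\}$ to get $\Omega_{G-uv}(u,v)\ge \frac{1}{d_G(u)-1}+\frac{1}{d_G(v)-1}$, and then re-inserts $uv$ via the parallel formula $\Omega_G(u,v)=\Omega_{G-uv}(u,v)/(1+\Omega_{G-uv}(u,v))$; you short directly in $G$ and evaluate the resulting parallel/series network in one step --- the same computation, packaged more cleanly. Where you genuinely diverge is the equality characterization. The paper does it in two stages: a second, refined shorting (keeping one vertex $w\in N(v)\setminus N(u)$ unmerged) to derive a contradiction unless $d_G(u)=d_G(v)$, and then an appeal to the external reference \cite{lw} for the full common-neighbour characterization. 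You instead give a self-contained argument: the sharpness criterion for shorting (resistance is preserved iff the shorted set is equipotential --- the ``only if'' direction deserves a sentence invoking the Dirichlet principle and uniqueness of the harmonic minimizer, since you assert it as standard), Kirchhoff's current law to force every neighbour of $u$ in $S$ to be adjacent to $v$ and conversely, and the transposition automorphism for the converse. This buys independence from \cite{lw}, and your converse is actually simpler than you describe: the automorphism pins \emph{all} of $S$ (every fixed point), not merely the common neighbours, to the midpoint potential, so no propagation step is needed. Two cautions. First, your KCL step presumes $\phi(u)$ differs from the common potential of $S$; this can fail only when $v$ is pendant, which you already set aside. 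Second, you are right that a pendant edge with $d_G(u)>1=d_G(v)$ attains the bound while violating $d_G(u)=d_G(v)$, so the stated ``if and only if'' is literally false in that degenerate case --- a gap present in the paper's own proof as well, since its strict inequality $\frac{2}{d_G(u)-1}>\frac{1}{d_G(u)-1}+\frac{1}{d_G(v)-1}$ degenerates when $d_G(u)=1$. Flagging this is an improvement on the paper, not an error in your argument.
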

\begin{proof}
Denote by $G-uv$ the graph obtained from $G$ by deleting the edge $uv$. In $G-uv$, short all vertices of $V(G-uv)\setminus \{u,v\}$ together, obtaining a new graph $G_1$. Then by Rayleigh's short-cut method, $\Omega_{G-uv}(u,v)\geq \Omega_{G_1}(u,v)$. By serial and parallel connections, $\Omega_{G_1}(u,v)=\frac{1}{d_{G_1}(u)}+\frac{1}{d_{G_1}(v)}=\frac{1}{d_{G-uv}(u)}+\frac{1}{d_{G-uv}(v)}=\frac{1}{d_{G}(u)-1}+\frac{1}{d_{G}(v)-1}.$ Hence $\Omega_{G-uv}(u,v)\geq \frac{1}{d_{G}(u)-1}+\frac{1}{d_{G}(v)-1}$. Noticing that $\Omega_G(u,v)=\frac{\Omega_{G-uv}(u,v)}{1+\Omega_{G-uv}(u,v)}$ by the parallel connection, we obtain (\ref{e3}).

To have the equality in (\ref{e3}), it requires that $\Omega_{G-uv}(u,v)=\frac{1}{d_{G}(u)-1}+\frac{1}{d_{G}(v)-1}$. Now we show that if $\Omega_{G-uv}(u,v)=\frac{1}{d_{G}(u)-1}+\frac{1}{d_{G}(v)-1}$, then $d_{G}(u)=d_{G}(v)$. Suppose to the contrary that $d_{G}(u)\neq d_{G}(v)$. Without loss of generality, suppose that $d_{G}(u)<d_{G}(v)$. Choose a vertex which is adjacent to $v$ but not to $u$, say $w$. In $G-uv$, short all vertices of $V(G-uv)\setminus \{u,v,w\}$ together, obtaining a graph $G_2$. Then it is not difficult to observe that $\Omega_{G_2}(u,v)\geq \frac{2}{d_{G}(u)-1}$. Thus $\Omega_{G-uv}(u,v)\geq \Omega_{G_2}(u,v)\geq \frac{2}{d_{G}(u)-1}>\frac{1}{d_{G}(u)-1}+\frac{1}{d_{G}(v)-1}$, a contradiction. Hence the equality in (\ref{e3}) holds only if $d_{G}(u)=d_{G}(v)$. If $d_{G}(u)=d_{G}(v)$, then it is obvious that $\frac{d_G(u)+d_G(v)-2}{d_G(u)d_G(v)-1}=\frac{1}{d_G(u)+1}+\frac{1}{d_G(v)+1}$. Besides, it has been shown shown in \cite{lw} that $\Omega_G(u,v)=\frac{1}{d_G(u)+1}+\frac{1}{d_G(v)+1}$ if and only if $uv\in E(G)$, $d_G(u)=d_G(v)$ and there are $d_G(u)-1$ vertices that are adjacent both to $u$ and to $v$. Hence equality in (\ref{e3}) holds if and only if $d_G(u)=d_G(v)$ and there are $d_G(u)-1$ vertices that are adjacent both to $u$ and to $v$.
\end{proof}

By Lemma \ref{ouv}, we have
\begin{thm}\label{cd}
Let $G$ be a connected graph. Then
\begin{equation}\label{e4}
C(G)\leq \sum_{i\sim j}\big(\frac{1}{d_G(i)-1}+\frac{1}{d_G(j)-1}\big)^{-1},
\end{equation}
with equality if and only if $G$ is complete.
\end{thm}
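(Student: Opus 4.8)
The plan is to bound each term of $C(G)$ separately using Lemma \ref{ouv} and then sum. Recalling that $C(G)=\sum_{i\sim j}\big[\sigma_G(i,j)-1\big]$ with $\sigma_G(i,j)=1/\Omega_G(i,j)$, I would first take reciprocals in the lower bound of Lemma \ref{ouv} to obtain, for every edge $ij$,
\begin{equation*}
\sigma_G(i,j)=\frac{1}{\Omega_G(i,j)}\leq\frac{d_G(i)d_G(j)-1}{d_G(i)+d_G(j)-2}.
\end{equation*}
Subtracting $1$ and factoring the numerator via $d_G(i)d_G(j)-1-\big(d_G(i)+d_G(j)-2\big)=(d_G(i)-1)(d_G(j)-1)$ gives
\begin{equation*}
\sigma_G(i,j)-1\leq\frac{(d_G(i)-1)(d_G(j)-1)}{d_G(i)+d_G(j)-2}=\Big(\frac{1}{d_G(i)-1}+\frac{1}{d_G(j)-1}\Big)^{-1}.
\end{equation*}
Summing over all edges then yields inequality (\ref{e4}) at once.

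The substantive part is the equality analysis. Equality in (\ref{e4}) forces equality in Lemma \ref{ouv} simultaneously on every edge $ij$, which by that lemma means $d_G(i)=d_G(j)$ and $i,j$ have exactly $d_G(i)-1$ common neighbours, for each edge. The condition $d_G(i)=d_G(j)$ on every edge, together with connectivity, makes $G$ regular of some degree $r$. For an edge $ij$ the common neighbours of $i$ and $j$ lie in $N(i)\setminus\{j\}$, a set of size $r-1$; requiring $r-1$ common neighbours forces $N(i)\setminus\{j\}\subseteq N(j)$, and symmetrically $N(j)\setminus\{i\}\subseteq N(i)$, so that $i$ and $j$ have equal closed neighbourhoods $N[i]=N[j]$.

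Finally I would propagate this along paths: since $N[i]=N[j]$ whenever $i\sim j$, connectivity gives a single common closed neighbourhood $S=N[v]$ for all $v$, whence $S=V(G)$ and $G=K_n$. The converse, that $K_n$ attains equality, is immediate since each edge of $K_n$ meets the equality condition of Lemma \ref{ouv}. I expect the closed-neighbourhood propagation to be the only delicate step; the inequality itself is a routine ``reciprocate and sum'' once Lemma \ref{ouv} is available.
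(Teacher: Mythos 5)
Your proposal is correct and follows essentially the same route as the paper: apply Lemma \ref{ouv} edgewise, reciprocate and subtract $1$ to get the bound, then use the equality characterization of that lemma (equal degrees plus $d_G(i)-1$ common neighbours on every edge) to deduce regularity and completeness. The only cosmetic difference is in the last step of the equality analysis, where you propagate $N[i]=N[j]$ along paths while the paper shows the closed neighbourhood of an edge induces a $K_{r+1}$ that must be all of the connected graph; both arguments are sound and equivalent in substance.
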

\begin{proof}
By (\ref{e3}), for any adjacent vertices $i$ and $j$, we have
$$\sigma_G(i,j)-1\leq \frac{d_G(i)d_G(j)-1}{d_G(i)+d_G(j)-2}-1=\big(\frac{1}{d_G(i)-1}+\frac{1}{d_G(j)-1}\big)^{-1},$$
which yields (\ref{e4}). Suppose that $G$ satisfies the equality in (\ref{e4}). Then for any two adjacent vertices $k,l\in V(G)$, we have $d_G(k)=d_G(l)$. This indicates that $G$ is regular since $G$ is connected. Suppose that $G$ is $r$-regular. Then any two adjacent vertices of
$G$ have $r-1$ common neighbors. Choose any two adjacent vertices $i,j\in V(G)$ and suppose that the $r-1$ common neighbors of them are $i_1,i_2,\ldots,i_{r-1}$. For any vertex $i_s$ ($1\leq s\leq r-1$), since $i_s$ is adjacent to $i$, $i_s$ and $i$ have $r-1$ common neighbors and thus $i_1,\ldots,i_{s-1},i_{s+1},\ldots,i_{r-1},j$ must be their common neighbors. Hence the subgraph induced by $i,i_1,\ldots,i_{r-1},j$ is a complete graph of order $r+1$. But if $r+1\neq |V(G)|$, then the subgraph induced by $i,i_1,\ldots,i_{r-1},j$ would form a component of $G$ which contradicts the hypothesis that $G$ is connected. Hence $G$ is $(|V(G)|-1)$-regular, that is, $G$ is a complete graph.
\end{proof}

If $G$ is regular, then we could obtain an immediate consequence of Theorem \ref{cd}.
\begin{cor}
Let $G$ be a r-rugular graph with $n$ vertices. Then
\begin{equation}
C(G)\leq \frac{nr(r-1)}{4}.
\end{equation}
\end{cor}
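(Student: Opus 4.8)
The plan is to derive this corollary as a direct specialization of Theorem \ref{cd}. Since the statement only asserts an inequality (with no equality characterization), the argument should be short: I simply substitute $d_G(i)=d_G(j)=r$ into the right-hand side of (\ref{e4}) and simplify.

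First I would observe that for an $r$-regular graph every vertex has degree exactly $r$, so for any edge $i\sim j$ we have $d_G(i)=d_G(j)=r$. Plugging this into the summand of Theorem \ref{cd} gives
\begin{equation*}
\Bigl(\frac{1}{d_G(i)-1}+\frac{1}{d_G(j)-1}\Bigr)^{-1}=\Bigl(\frac{1}{r-1}+\frac{1}{r-1}\Bigr)^{-1}=\Bigl(\frac{2}{r-1}\Bigr)^{-1}=\frac{r-1}{2}.
\end{equation*}
Thus each term of the sum in (\ref{e4}) equals the constant $\frac{r-1}{2}$, independent of the particular edge.

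Next I would count the edges. A connected $r$-regular graph on $n$ vertices has $m=\frac{nr}{2}$ edges by the handshake lemma, so the sum in (\ref{e4}) runs over $\frac{nr}{2}$ terms. Multiplying the constant value of each term by the number of edges yields
\begin{equation*}
C(G)\leq \frac{nr}{2}\cdot\frac{r-1}{2}=\frac{nr(r-1)}{4},
\end{equation*}
which is exactly the claimed bound.

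I do not anticipate any genuine obstacle here, since the corollary is a routine evaluation of the general bound from Theorem \ref{cd}; the only care needed is to correctly apply the handshake identity $m=nr/2$ and to keep the algebraic simplification of $\bigl(\tfrac{2}{r-1}\bigr)^{-1}$ straight. One might also note in passing that, by the equality clause of Theorem \ref{cd}, the bound is attained precisely when $G=K_n$ (i.e.\ $r=n-1$), though the corollary as stated does not require this.
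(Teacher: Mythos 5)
Your proposal is correct and is exactly the computation the paper intends: the paper offers no written proof, calling the corollary an ``immediate consequence'' of Theorem \ref{cd}, and the immediate consequence is precisely your substitution $d_G(i)=d_G(j)=r$ giving $\frac{r-1}{2}$ per edge times $m=\frac{nr}{2}$ edges. (The only cosmetic caveat, which the paper also ignores, is that the expression in (\ref{e4}) requires $r\geq 2$; for $r=1$ the graph is $K_2$ and both sides are $0$ anyway.)
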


Denote by $\Delta(G)$ the maximum degree of $G$. Then Theorem \ref{cd} yields
\begin{cor}
Let $G$ be a connected graph with $m$ edges. Then
\begin{equation}
C(G)\leq \frac{m(\Delta(G)-1)}{2}.
\end{equation}
\end{cor}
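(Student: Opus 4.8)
The plan is to derive the bound directly from Theorem \ref{cd}, which already expresses an upper bound for $C(G)$ as a sum over the edges of $G$ of the quantity $\left(\frac{1}{d_G(i)-1}+\frac{1}{d_G(j)-1}\right)^{-1}$. The idea is simply to bound each individual summand from above in terms of the maximum degree $\Delta(G)$, and then to use that there are exactly $m$ summands.

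First I would observe that for any edge $ij$ the endpoint degrees satisfy $d_G(i)\le\Delta(G)$ and $d_G(j)\le\Delta(G)$. Since the map $t\mapsto \frac{1}{t-1}$ is strictly decreasing for $t>1$, enlarging $d_G(i)$ and $d_G(j)$ up to $\Delta(G)$ can only decrease the bracketed sum $\frac{1}{d_G(i)-1}+\frac{1}{d_G(j)-1}$, and hence can only increase its reciprocal. For an edge both of whose endpoints have degree at least $2$ this yields
$$\left(\frac{1}{d_G(i)-1}+\frac{1}{d_G(j)-1}\right)^{-1}\le\left(\frac{2}{\Delta(G)-1}\right)^{-1}=\frac{\Delta(G)-1}{2}.$$

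The one point requiring a word of care is the presence of pendant vertices: if $d_G(i)=1$, then $\frac{1}{d_G(i)-1}$ is to be read as $+\infty$ and the corresponding reciprocal is $0$, consistent with the fact that a cut edge contributes nothing to the cyclicity index; such a term is in any case trivially at most $\frac{\Delta(G)-1}{2}$. Thus each of the $m$ summands is bounded by $\frac{\Delta(G)-1}{2}$, and summing over all edges gives
$$C(G)\le\sum_{i\sim j}\frac{\Delta(G)-1}{2}=\frac{m(\Delta(G)-1)}{2}.$$
I do not expect any genuine obstacle here beyond this small bookkeeping with degree-one vertices; the corollary is essentially an edge-by-edge weakening of Theorem \ref{cd}.
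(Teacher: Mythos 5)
Your proposal is correct and is precisely the intended derivation: the paper states this corollary as an immediate consequence of Theorem \ref{cd} without writing out the details, and bounding each of the $m$ summands by $\frac{\Delta(G)-1}{2}$ via the monotonicity of $t\mapsto\frac{1}{t-1}$ is exactly that omitted argument. Your remark about pendant edges (interpreting the summand as $0$ when an endpoint has degree $1$) is a sensible piece of bookkeeping that the paper glosses over.
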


\subsection{Bounds via majorization techniques}
We first introduce some notions and results about the majorization order and Schur-convexity (see \cite{mo} for more details). Let
$$\mathcal{D}=\{\mathbf{x}\in\mathbb{R}^n: x_1\geq x_2\geq \ldots \geq x_n\}.$$
Given two vectors $\mathbf{y},\mathbf{z}\in \mathcal{D}$, the \textit{majorization order} $\mathbf{y}\unlhd \mathbf{z}$ means: $\sum\limits_{i=1}^n y_i=\sum\limits_{i=1}^n z_i$, and for $k=1,2,\ldots,n-1$, $\sum\limits_{i=1}^k y_i=\sum\limits_{i=1}^k z_i$. In this paper, we only consider some subsets of
$$\Sigma_\alpha=\mathcal{D}\cap\{\mathbf{x}\in \mathbb{R}_{+}^n:x_1+x_2+\ldots+x_n=\alpha\},$$
where $\alpha\in \mathbb{R}$, $\alpha>0$. Given a closed subset $S\in \Sigma_\alpha$, a vector $\mathbf{x}^*(S)\in S$ is said to be maximal for $S$ with respect to the majorization order if $\mathbf{x}\unlhd\mathbf{x}^*(S)$ for each $\mathbf{x}\in S$. Analogously, a vector $\mathbf{x}_*(S)\in S$ is said to be minimal for $S$ with respect to the majorization order if $\mathbf{x}_*(S)\unlhd\mathbf{x}$ for each $\mathbf{x}\in S$.

A symmetric function $\phi: A\rightarrow \mathbb{R}, A\subseteq \mathbb{R}^n$, is said to be \textit{Schur-convex} on $A$ if $\mathbf{x}\unlhd \mathbf{y}$ implies $\phi(\mathbf{x})\leq \phi(\mathbf{y})$. If in addition $\phi(\mathbf{x})<\phi(\mathbf{y})$ for $\mathbf{x}\unlhd \mathbf{y}$ but $\mathbf{x}$ is not a permutation of $\mathbf{y}$, $\phi$ is said to be strictly Schur-convex on $A$.

\begin{pro}\cite{mo}
Let $I\subset \mathbb{R}$ be an interval and let $\phi(x)=\sum\limits_{i=1}^ng(x_i)$, where $g: I\rightarrow \mathbb{R}$. If $g$ is strictly convex on $I$, then $\phi$ is strictly Schur-Convex on $I^n=\underbrace{I\times\cdots \times I}_{n-times}$.
\end{pro}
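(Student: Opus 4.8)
The plan is to exploit the additive, symmetric structure of $\phi$ together with the classical fact that majorization is generated by elementary transfers. First I would observe that $\phi(\mathbf{x})=\sum_{i=1}^n g(x_i)$ is manifestly a symmetric function, so it remains only to compare values of $\phi$ along a chain connecting two vectors ordered by $\unlhd$. The key structural input, due to Hardy--Littlewood--P\'olya (see \cite{mo}), is that whenever $\mathbf{x}\unlhd\mathbf{y}$ the vector $\mathbf{x}$ can be obtained from $\mathbf{y}$ by a finite sequence of \emph{$T$-transforms}: each such transform fixes all but two coordinates $a\geq b$ and replaces them by $a'=\lambda a+(1-\lambda)b$ and $b'=(1-\lambda)a+\lambda b$ for some $\lambda\in[0,1]$, moving mass from the larger coordinate to the smaller one. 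A $T$-transform is nontrivial (it strictly reduces the spread of the two coordinates) precisely when $a>b$ and $\lambda\in(0,1)$.

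Second, I would analyze the effect of a single $T$-transform on $\phi$. Since the transform changes only the two affected summands and keeps their sum fixed, it suffices to compare $g(a')+g(b')$ with $g(a)+g(b)$. Because $a'$ and $b'$ are convex combinations of $a$ and $b$ with $\lambda\in(0,1)$ in the nontrivial case, strict convexity of $g$ gives $g(a')<\lambda g(a)+(1-\lambda)g(b)$ and $g(b')<(1-\lambda)g(a)+\lambda g(b)$ whenever $a\neq b$. Adding these two inequalities yields
$$g(a')+g(b')<g(a)+g(b),$$
so every nontrivial $T$-transform strictly decreases $\phi$, while a trivial one (identity or a coordinate swap) leaves $\phi$ unchanged by symmetry.

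Finally, I would assemble the pieces. Applying the transfer inequality step by step along the Hardy--Littlewood--P\'olya chain descending from $\mathbf{y}$ to $\mathbf{x}$ shows $\phi(\mathbf{x})\leq\phi(\mathbf{y})$, which is exactly Schur-convexity. For the strict statement, note that if $\mathbf{x}$ is not a permutation of $\mathbf{y}$ then the chain cannot consist solely of trivial transforms, so at least one step is a nontrivial $T$-transform with $a>b$; that step strictly decreases $\phi$, and since no step increases it, we obtain $\phi(\mathbf{x})<\phi(\mathbf{y})$, establishing strict Schur-convexity on $I^n$.

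I expect the main obstacle to be the reduction to $T$-transforms: the decomposition of an arbitrary majorization relation into finitely many elementary transfers is the one genuinely nontrivial ingredient, and one must take care that a nonpermutation pair $\mathbf{x}\neq\mathbf{y}$ forces at least one \emph{strict} step, rather than a chain that merely permutes coordinates. Once this reduction and the two-point convexity estimate are in hand, the symmetry of $\phi$ makes the remaining bookkeeping routine.
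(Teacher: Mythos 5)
Your argument is correct: the paper itself gives no proof of this proposition (it is quoted directly from Marshall--Olkin \cite{mo}), and your route via the Hardy--Littlewood--P\'olya decomposition of majorization into $T$-transforms, combined with the two-point strict convexity estimate $g(a')+g(b')<g(a)+g(b)$, is precisely the standard proof found in that reference. The one point worth making explicit is the observation you already flag, namely that a chain consisting entirely of trivial transforms (identities and transpositions) composes to a permutation, so a non-permutation pair forces at least one strictly decreasing step; with that noted, the proof is complete.
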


\begin{lemma}\cite{mo}\label{lub}
Let $0\leq m<M$ and $m\leq\frac{\alpha}{n}\leq M$. Given the subset
$$S=\Sigma_\alpha=\mathcal{D}\cap\{\mathbf{x}\in \mathbb{R}^n:M\geq x_1\geq x_2\geq \ldots \geq x_n\geq m\}$$ we have
$$\mathbf{x}_*(S)=(\frac{\alpha}{n},\frac{\alpha}{n},\ldots,\frac{\alpha}{n}),$$
and
$$\mathbf{x}^*(S)=(\underbrace{M,\ldots,M}_k,\theta,\underbrace{m,\ldots,m}_{n-k-1}),$$
where $k=\displaystyle\lfloor\frac{\alpha-nm}{M-m}\rfloor$ and $\theta=\alpha-Mk-m(n-k-1)$.
\end{lemma}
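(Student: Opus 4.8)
The plan is to establish the two assertions separately: that the equal-coordinate vector is minimal and that the ``bang--bang'' vector is maximal for $S$ under $\unlhd$. Throughout I write $\mathbf{u}=(\alpha/n,\ldots,\alpha/n)$ and $\mathbf{x}^*=(\underbrace{M,\ldots,M}_k,\theta,\underbrace{m,\ldots,m}_{n-k-1})$, and set $t=(\alpha-nm)/(M-m)$ so that $k=\lfloor t\rfloor$. Both vectors must first be checked to lie in $S$, and then be compared with an arbitrary $\mathbf{x}\in S$ at the level of partial sums, which is exactly what the majorization order records. I expect the minimal-element claim to be routine and the maximal-element claim to carry all the real content.

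For the minimal element, I would first note $\mathbf{u}\in S$: it is trivially non-increasing, its coordinate sum is $\alpha$, and the hypothesis $m\leq\alpha/n\leq M$ places every coordinate in $[m,M]$. To see $\mathbf{u}\unlhd\mathbf{x}$ for every $\mathbf{x}\in S$, observe that for any $\mathbf{x}\in\mathcal{D}$ with $\sum_i x_i=\alpha$ the first $j$ coordinates are the $j$ largest, so their mean is at least the global mean $\alpha/n$; hence $\sum_{i=1}^j x_i\geq j\cdot(\alpha/n)=\sum_{i=1}^j u_i$ for each $j$, with equality at $j=n$. This is precisely $\mathbf{u}\unlhd\mathbf{x}$, so $\mathbf{u}=\mathbf{x}_*(S)$.

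For the maximal element, I would first verify $\mathbf{x}^*\in S$. Its coordinate sum is $\alpha$ by the defining formula for $\theta$, and rewriting $\theta=(t-k)(M-m)+m$ shows $\theta\in[m,M]$ because $t-k=t-\lfloor t\rfloor\in[0,1)$; this also yields the non-increasing order $M\geq\theta\geq m$. The core of the argument is a single partial-sum estimate: for every $\mathbf{x}\in S$ and every $j$,
\[
\sum_{i=1}^j x_i\;\leq\;\min\{\,Mj,\ \alpha-(n-j)m\,\},
\]
the first bound using $x_i\leq M$ on the head and the second using $x_i\geq m$ on the tail via $\sum_{i=1}^j x_i=\alpha-\sum_{i>j}x_i$. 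I would then show that the partial sums of $\mathbf{x}^*$ attain this minimum for every $j$: for $j\leq k$ one has $\sum_{i=1}^j x_i^*=Mj$, and $Mj\leq\alpha-(n-j)m$ reduces to $j\leq t$, which holds since $j\leq k=\lfloor t\rfloor$; for $j\geq k+1$ one has $\sum_{i=1}^j x_i^*=\alpha-(n-j)m$, and the reverse inequality reduces to $j\geq t$, which holds since $j\geq k+1>t$. Hence $\sum_{i=1}^j x_i\leq\sum_{i=1}^j x_i^*$ for all $j$, with equality at $j=n$, i.e.\ $\mathbf{x}\unlhd\mathbf{x}^*$, proving $\mathbf{x}^*=\mathbf{x}^*(S)$.

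The main obstacle is the bookkeeping in the maximal-element step: correctly locating the crossover index at $k=\lfloor t\rfloor$ and verifying the identity $\sum_{i=1}^j x_i^*=\min\{Mj,\alpha-(n-j)m\}$ across all $j$, together with the boundary cases where $t$ is an integer or where $k=0$ or $k=n-1$ (so that the block of $M$'s or of $m$'s is empty). These degenerate configurations must be checked so that $\mathbf{x}^*$ is always a legitimate element of $S$ and so that the two box-constraint bounds meet exactly at the switch. Once the crossover is pinned down, the comparison of partial sums is immediate from the two constraints $m\leq x_i\leq M$.
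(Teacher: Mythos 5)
Your argument is correct, and in fact the paper offers nothing to compare it against: Lemma~\ref{lub} is quoted from Marshall--Olkin \cite{mo} with no proof given. What you have written is essentially the standard proof of this classical fact, and all the steps check out. The minimal-element part is exactly the observation that the average of the $j$ largest coordinates dominates the global average. For the maximal element, the identity $\theta=(t-k)(M-m)+m$ with $t-k\in[0,1)$ correctly places $\theta$ in $[m,M)$ and gives monotonicity, and the two-sided bound $\sum_{i=1}^j x_i\leq\min\{Mj,\ \alpha-(n-j)m\}$ together with the verification that the partial sums of $\mathbf{x}^*$ equal this minimum (the crossover $j\leq t$ versus $j>t$ matching $j\leq k$ versus $j\geq k+1$) is precisely the right computation. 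The only loose end is the fully degenerate case $\alpha=nM$, where $t=n$, $k=n$, and the stated form of $\mathbf{x}^*$ does not parse (there is no room for a $\theta$ or an $m$-block); this is a defect of the lemma as stated rather than of your proof, and in that case $S$ is the single point $(M,\ldots,M)$ anyway. You flag the boundary cases $k=0$ and $k=n-1$ but your main computation already covers them uniformly, so nothing further is needed there.
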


We also need to show two fundamental facts on resistance distances as given in the following two lemmas.

\begin{lemma}\label{day}
Let $G$ be a connected graph with $n$ vertices ($n\geq 3$). If $G$ has a cut edge, then for any two vertices $u,v\in V(G)$,
$$\Omega_G(u,v)>\frac{2}{n}.$$
\end{lemma}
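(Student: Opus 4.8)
The plan is to split on how $u$ and $v$ sit relative to the cut edge. Write the cut edge as $e=ab$, and let $G-e$ have the two connected components $G_1\ni a$ and $G_2\ni b$, with $n_1=|V(G_1)|$ and $n_2=|V(G_2)|$, so that $n_1+n_2=n$ and $n_1,n_2\geq 1$. I may assume $u\neq v$, since otherwise $\Omega_G(u,v)=0$ and the claim is vacuous. The two cases to handle are: (A) $u,v$ lie in the same component, and (B) $u,v$ lie in different components.

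For case (B), say $u\in V(G_1)$ and $v\in V(G_2)$. Since $e$ is a cut edge, every path from $u$ to $v$ traverses $e$, so when unit current is sent from $u$ to $v$ all of it passes through $e$; by the series law the network decomposes and
\[
\Omega_G(u,v)=\Omega_{G_1}(u,a)+1+\Omega_{G_2}(b,v)\geq 1.
\]
Because $n\geq 3$ we have $2/n\leq 2/3<1$, whence $\Omega_G(u,v)>2/n$ in this case.

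For case (A), say $u,v\in V(G_1)$ (the other subcase is symmetric). Since all of $G_2$ together with $e$ is attached to $G_1$ only at the vertex $a$, Kirchhoff's current law at $e$ forces zero net current into this pendant part when the sources are placed at $u$ and $v$; consequently it carries no current and may be deleted without changing the effective resistance, giving $\Omega_G(u,v)=\Omega_{G_1}(u,v)$. The key remaining ingredient is the uniform lower bound $\Omega_H(p,q)\geq 2/|V(H)|$ for any two distinct vertices $p,q$ of a connected graph $H$. I would obtain this from Rayleigh's monotonicity (the cutting-edges form already used in Lemma \ref{ouv}): every connected graph $H$ on $k$ vertices is a spanning subgraph of $K_k$, so $\Omega_H(p,q)\geq \Omega_{K_k}(p,q)=2/k$, using the fact (already recorded for $C(K_n)$) that all vertices of $K_k$ are at mutual resistance $2/k$. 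Applying this to $G_1$, which has $n_1\geq 2$ vertices since $u\neq v$ and satisfies $n_1\leq n-1$ since $n_2\geq 1$, yields
\[
\Omega_G(u,v)=\Omega_{G_1}(u,v)\geq \frac{2}{n_1}\geq \frac{2}{n-1}>\frac{2}{n}.
\]

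Combining the two cases proves the lemma. I expect the main obstacle to be making the two structural reductions precise: the series decomposition in case (B) and, more delicately, the assertion in case (A) that a subnetwork hanging off a single attachment point carries no current and is therefore irrelevant to the effective resistance. Both follow from current conservation across the cut edge, but they should be stated carefully. Once they are in hand, the uniform lower bound $2/|V(H)|$ from Rayleigh monotonicity closes the argument and even delivers the strict inequality automatically, since $n_1\leq n-1<n$.
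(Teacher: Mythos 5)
Your proof is correct and follows essentially the same route as the paper's: the same case split on whether $u,v$ lie in the same component of $G-e$, the series decomposition giving resistance at least $1$ in the cross-component case, and the reduction to $\Omega_{G_1}(u,v)\geq 2/|V(G_1)|>2/n$ in the same-component case. You are merely more explicit than the paper about justifying the pendant-subnetwork deletion and the bound $\Omega_H(p,q)\geq 2/|V(H)|$ via Rayleigh monotonicity against $K_k$, which the paper asserts without comment.
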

\begin{proof}
Suppose that $e=ij$ is a cut edge of $G$, and let $G_1$, $G_2$ be two components of $G-e$ such that $i\in V(G_1)$, $j\in V(G_2)$. If $u$ and $v$ are in the same component, say $G_1$, then $\Omega_G(u,v)=\Omega_{G_1}(u,v)\geq \frac{2}{|V(G_1)|}>\frac{2}{n}$. If $u$ and $v$ are in different components, say $u\in G_1$ and $v\in G_2$, then $\Omega_G(u,v)=\Omega_G(u,i)+\Omega_G(i,j)+\Omega_G(j,v)>\Omega_G(i,j)=1>\frac{2}{n}$.
\end{proof}
\begin{lemma}\label{2n}
Let $G$ be a connected graph of order $n$ ($n\geq 3$). If $G$ is not complete, there are at most $\frac{n^2-5n+8}{2}$ pairs of vertices with resistance distances being equal to $\frac{2}{n}$.
\end{lemma}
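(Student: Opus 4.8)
The plan is to pin down exactly which pairs can attain the minimum possible resistance distance $\frac{2}{n}$ and then to count them. The starting point is the universal lower bound $\Omega_G(u,v)\geq \frac{2}{n}$ for every pair $u,v$: since $G$ is a spanning subgraph of $K_n$, cutting the missing edges out of $K_n$ can only increase resistances (Rayleigh's short-cut method), so $\Omega_G(u,v)\geq \Omega_{K_n}(u,v)=\frac{2}{n}$. Thus the pairs counted in the lemma are precisely those attaining equality, and the goal becomes showing that equality is very restrictive.

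First I would dispose of the non-adjacent pairs. If $u$ and $v$ are not adjacent, then adding the edge $uv$ and applying (\ref{e1}) gives $\Omega_{G+uv}(u,v)=\frac{\Omega_G(u,v)}{1+\Omega_G(u,v)}$. Since $G+uv$ is again a connected graph on $n$ vertices, $\Omega_{G+uv}(u,v)\geq \frac{2}{n}$, and solving $\frac{x}{1+x}\geq \frac{2}{n}$ for $x=\Omega_G(u,v)$ yields $\Omega_G(u,v)\geq \frac{2}{n-2}>\frac{2}{n}$. Hence no non-adjacent pair attains $\frac{2}{n}$, and every pair that must be counted is an edge of $G$.

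The crux is the adjacent case. For an edge $uv$, Lemma \ref{ouv} gives $\Omega_G(u,v)\geq \frac{d_G(u)+d_G(v)-2}{d_G(u)d_G(v)-1}$, so $\Omega_G(u,v)=\frac{2}{n}$ forces $\frac{d_G(u)+d_G(v)-2}{d_G(u)d_G(v)-1}\leq \frac{2}{n}$. Writing $a=d_G(u)$, $b=d_G(v)$ (and noting that two adjacent degree-$1$ vertices would form an isolated $K_2$, impossible for connected $G$ with $n\geq 3$, so the denominator $ab-1$ is positive), I clear denominators to obtain $f(a,b):=n(a+b)-2ab\leq 2n-2$. I would then analyze $f$ on the square $1\leq a,b\leq n-1$: it is affine in each variable separately, so its extrema over the square occur at corners, and the corner values are $f(1,1)=f(n-1,n-1)=2n-2$ together with $f(1,n-1)=f(n-1,1)=n^2-2n+2=(n-2)^2+(2n-2)>2n-2$. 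Therefore $f(a,b)\leq 2n-2$ forces $(a,b)\in\{(1,1),(n-1,n-1)\}$; ruling out $(1,1)$ as above leaves $d_G(u)=d_G(v)=n-1$. So every pair attaining $\frac{2}{n}$ consists of two vertices of full degree $n-1$.

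It remains to count. Let $F$ be the set of vertices of degree $n-1$; by the above, all $\frac{2}{n}$-pairs lie inside $F$, so their number is at most $\binom{|F|}{2}$. Because $G$ is not complete, $|F|\leq n-2$: if $|F|=n-1$, the unique remaining vertex would be adjacent to all $n-1$ full-degree vertices and hence have degree $n-1$ itself, forcing $G=K_n$. Consequently the number of pairs with resistance distance $\frac{2}{n}$ is at most $\binom{n-2}{2}=\frac{n^2-5n+6}{2}\leq \frac{n^2-5n+8}{2}$, which proves the lemma (indeed the sharper bound $\frac{n^2-5n+6}{2}$, attained by $K_n-e$). The main obstacle is the extremal analysis of $f$ on the degree-square, in particular correctly identifying the spurious minimizer $(1,1)$ and excluding it via connectivity; the remaining ingredients are a routine assembly of the universal lower bound, the recursion (\ref{e1}), and Lemma \ref{ouv}.
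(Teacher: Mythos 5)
Your proof is correct, and it takes a genuinely different route from the paper's. The paper argues by comparison: since $G$ is not complete it is a spanning subgraph of $K_n^-=K_n-e$, so by Rayleigh monotonicity every $\frac{2}{n}$-pair of $G$ is a $\frac{2}{n}$-pair of $K_n^-$, and one just counts the latter. You instead characterize the attaining pairs intrinsically: non-adjacent pairs are excluded via the recursion (\ref{e1}) (or equivalently via $G\subseteq K_n-uv$), and adjacent pairs are forced by Lemma \ref{ouv} to have both endpoints of degree $n-1$, whence the count is at most $\binom{|F|}{2}\leq\binom{n-2}{2}$. Your approach is slightly longer but buys two things: a structural description of which pairs can attain the minimum, and the sharp constant $\frac{n^2-5n+6}{2}$ (attained by $K_n-e$), which is actually stronger than the lemma's $\frac{n^2-5n+8}{2}$; indeed the paper's own count of $\frac{2}{n}$-pairs in $K_n^-$ inadvertently includes the pair $\{u,v\}$, whose resistance is $\frac{2}{n-2}$, not $\frac{2}{n}$, so your version quietly corrects a small slip (harmless there, since the lemma is only an upper bound). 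One spot in your write-up deserves a line more of justification: from ``extrema of a bilinear function occur at corners'' you get $f\geq 2n-2$ on the box, but to conclude that $f(a,b)\leq 2n-2$ forces $(a,b)\in\{(1,1),(n-1,n-1)\}$ you also need that the minimum is attained \emph{only} at those corners; this follows because for fixed $a\neq n/2$ the affine map $b\mapsto f(a,b)$ is strictly monotone with its unique minimizer at $b=1$ or $b=n-1$ (giving $a(n-2)+n$ resp.\ $n^2-n-a(n-2)$, each equal to $2n-2$ only at $a=1$ resp.\ $a=n-1$), while $f(n/2,b)=n^2/2>2n-2$ for $n\geq 3$. With that observation supplied, the argument is complete.
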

\begin{proof}
Denote by $K_n^-$ the graph obtained from $K_n$ by deletion of an edge $uv$. We first show that there are exactly $\frac{n^2-5n+6}{2}$ pairs of vertices in $K_n^-$ with resistance distances being equal to $\frac{2}{n}$.  It is not hard to compute that $\Omega_{K_n^-}(u,v)=\frac{2}{n}$ and for any pair of vertices $\{x,y\}\neq \{u,v\} $, $\Omega_{K_n^-}(x,y)=\frac{2}{n}$. But for any vertex $w\in V(K_n^-)\setminus\{u,v\}$, by Lemma \ref{ouv}, we have
$$\Omega_{K_n^-}(w,u)=\Omega_{K_n^-}(w,v)\geq \frac{n-1+n-2-2}{(n-1)(n-2)-1}=\frac{2n-5}{n^2-3n+1}>\frac{2}{n}.$$
Hence in $K_n^-$, there are exactly $\frac{n(n-1)}{2}-2(n-2)=\frac{n^2-5n+8}{2}$ pairs of vertices with resistance distances being equal to $\frac{2}{n}$. Note that $G$ is a subgraph of $K_n^-$ since $G$ is not complete. Then the desired result follows since $\Omega_{G}(u,v)\geq \Omega_{K_n^-}(u,v)$ for all $u,v\in V(G)$ according to Rayleigh short-cut method.
\end{proof}

Before proving the main results of this section, we still need to define a class of graphs which will be frequently used in what follows.
\begin{defn}
A connected graph $G$ is called \textit{electrically-edge-equivalent} if all the resistance distances between pairs of adjacent vertices are equal, that is, for any two edges $e_1=uv$ and $e_2=xy$, $\Omega_G(u,v)=\Omega_G(x,y)$.
\end{defn}
As can be easily seen, electrically-edge-equivalent graphs contain some interesting classes of graphs, such as trees and edge-transitive graphs.

Now we are ready to give our main results in this section.

\begin{thm}\label{bound}
Let $G$ be a connected graph with $n$ vertices ($n\geq 3$) and $m$ edges. Then
\begin{equation}\label{ie1}
\frac{m(m-n+1)}{n-1}\leq C(G)\leq \frac{n}{(n-2)\epsilon+2}+\frac{mn-n^2+(n-2)\epsilon}{2},
\end{equation}
where $\epsilon=\frac{n^2-n-2m}{n-2}-\lfloor\frac{n^2-n-2m}{n-2}\rfloor$.
Moreover, the first equality holds if and only if $G$ is electrically-edge-equivalent, and the second does if and only if $G$ is a tree or $G=K_n$.
\end{thm}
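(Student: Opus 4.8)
The plan is to read off $C(G)$ from the multiset of resistance distances carried by the $m$ edges and then apply the majorization result Lemma~\ref{lub} to the strictly convex function $x\mapsto 1/x$. Writing $x_1,\dots,x_m$ for the values $\Omega_G(i,j)$ taken over the edges $ij$ of $G$, the identity $C(G)=\sum_{i\sim j}\sigma_G(i,j)-|E(G)|$ becomes $C(G)=\sum_{k=1}^m \frac1{x_k}-m$. First I would pin down the feasible region of $(x_1,\dots,x_m)$ by two ingredients: Foster's first formula, which fixes $\sum_{k=1}^m x_k=n-1$, and the box constraints $\frac2n\le x_k\le 1$. The upper constraint holds because the resistance across an edge never exceeds $1$ (with equality exactly when the edge is a bridge), and the lower one follows from Rayleigh's short-cut method applied to $G\subseteq K_n$, giving $\Omega_G(i,j)\ge\Omega_{K_n}(i,j)=\frac2n$. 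Thus the resistance vector lies in the set $S$ of Lemma~\ref{lub} with the number of variables equal to $m$, $\alpha=n-1$, $M=1$ and lower bound $\frac2n$; the hypothesis $\frac2n\le\frac{n-1}m\le 1$ is exactly $n-1\le m\le\binom n2$, valid for a connected simple graph.

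Next, since $g(x)=1/x$ is strictly convex on $(0,\infty)$, the cited Schur-convexity proposition makes $\phi(\mathbf x)=\sum_k 1/x_k$ strictly Schur-convex, so $\phi(\mathbf x_*(S))\le\phi(\mathbf x)\le\phi(\mathbf x^*(S))$ for the resistance vector $\mathbf x$. Substituting the majorization-minimal vector $\mathbf x_*(S)=(\frac{n-1}m,\dots,\frac{n-1}m)$ gives $\phi\ge\frac{m^2}{n-1}$, hence $C(G)\ge\frac{m^2}{n-1}-m=\frac{m(m-n+1)}{n-1}$. Substituting the majorization-maximal vector $\mathbf x^*(S)=(\underbrace{1,\dots,1}_{k},\theta,\underbrace{\tfrac2n,\dots,\tfrac2n}_{m-k-1})$ with $k=\lfloor\frac{n^2-n-2m}{n-2}\rfloor$, a short computation shows $\theta=\frac{(n-2)\epsilon+2}n$, and collecting the three blocks as $\phi(\mathbf x^*)=k+\frac1\theta+(m-k-1)\frac n2$ and subtracting $m$ simplifies to the claimed upper bound. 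This algebraic simplification is routine but needs care with the floor/fractional-part bookkeeping.

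The equality discussion is where the real work lies. For the lower bound, strict Schur-convexity forces $\mathbf x$ to be a permutation of $\mathbf x_*(S)$, i.e. all edge resistances equal $\frac{n-1}m$, which is precisely the definition of an electrically-edge-equivalent graph. For the upper bound, equality forces the edge-resistance multiset to be exactly $k$ copies of $1$, one copy of $\theta$, and $m-k-1$ copies of $\frac2n$; the main obstacle is showing this is realizable only by a tree or by $K_n$, and this is exactly what Lemmas~\ref{day} and~\ref{2n} are for. I would split on whether $G$ has a cut edge. If it does, Lemma~\ref{day} says every resistance distance exceeds $\frac2n$, so no edge can occupy a $\frac2n$-slot; this collapses the profile to the all-ones vector, forcing $m=n-1$ and hence a tree. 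If $G$ is bridgeless, then no edge attains the value $1$, so consistency with $\mathbf x^*(S)$ requires $k=0$, which in turn forces the density $m>\frac{n^2-2n+2}2$; but then at least $m-1$ edges attain $\frac2n$, and since $m-1>\frac{n^2-2n}2>\frac{n^2-5n+8}2$ for $n\ge3$, Lemma~\ref{2n} is violated unless $G=K_n$. Conversely one checks directly that both $T_n$ and $K_n$ meet the upper bound, completing the characterization.
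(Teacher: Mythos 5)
Your proposal is correct and follows essentially the same route as the paper: Foster's formula plus the box constraints $\tfrac2n\le\Omega\le1$, Lemma \ref{lub} applied to the strictly Schur-convex $\sum 1/\Omega_i$, and Lemmas \ref{day} and \ref{2n} for the equality analysis. Your case split on whether $G$ has a bridge is just a repackaging of the paper's three cases on $k$ (your bridge case absorbs the paper's Cases 1 and 2, your bridgeless case is its Case 3), so there is nothing substantively different to flag.
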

\begin{proof}
Let $e_1$, $e_2$,\ldots,$e_m$ be edges of $G$. For simplicity, denote the resistance distance between the end vertices of $e_i$ by $\Omega_i$, $1\leq i\leq m$. Without loss of generality, suppose that $\Omega_1\geq \Omega_2\geq\ldots\geq \Omega_m$. Again by the Foster's first formula mentioned in the second section, we have $\Omega_1+\Omega_2+\ldots+ \Omega_m=n-1.$
In order to obtain the desired inequalities, we evaluate the extremal values of the Schur-convex function
\begin{equation}
 f(\Omega_1,\Omega_2,\ldots,\Omega_m)=\sum_{i=1}^{m}\frac{1}{\Omega_i},
\end{equation}
and consider the set
$$S=\{\Omega\in \mathbb{R}^{m}:\sum_{i=1}^{m}\Omega_i=n-1, \frac{2}{n}\leq\Omega_m\leq \Omega_{m-1}\leq\ldots\leq \Omega_1\leq 1 \}.$$
It is easily verified that $f(\Omega_1,\Omega_2,\ldots,\Omega_m)$ is strictly Schur-convex on $S$.
By Lemma \ref{lub}, we know that the minimal element of $S$ with respect to majorization order is given by
$$(\underbrace{\frac{n-1}{m},\frac{n-1}{m},\ldots,\frac{n-1}{m}}_m).$$
The function $f$ attains its minimum at this point with the minimum value given by $\frac{m^2}{n-1}$.
Hence the minimum value of $C(G)$ is $$\frac{m^2}{n-1}-m=\frac{m(m-n+1)}{n-1}.$$ Note that the minimum value of $C(G)$ is achieved if and only if resistance distances between all pairs of adjacent vertices are equal. Hence the first equality in (\ref{ie1}) holds if and only if $G$ is electrically-edge-equivalent.

Now we prove the upper bound. By Lemma \ref{lub}, the maximal element of $S$ with respect to the majorization order is given by
$$(\underbrace{1,\ldots,1}_{k},\theta,\underbrace{\frac{2}{n},\ldots,\frac{2}{n}}_{m-k-1}),$$
where $k=\lfloor\frac{n-1-m\frac{2}{n}}{1-\frac{2}{n}}\rfloor=\lfloor\frac{n^2-n-2m}{n-2}\rfloor$ and $\theta=n-1-k-\frac{2}{n}(m-k-1)$. The function $f$ attains its minimum at this point with the minimum value given by $k+\frac{1}{\theta}+\frac{n(m-k-1)}{2}$.

Hence if we let $\epsilon=\frac{n^2-n-2m}{n-2}-k$, then
\begin{align*}
&C(G)\leq k+\frac{1}{\theta}+\frac{n(m-k-1)}{2}-m\\
&=k+\frac{1}{n-1-k-\frac{2}{n}(m-k-1)}+\frac{n(m-k-1)}{2}-m\\
&=k+\frac{n}{n^2-n-2m-(n-2)k+2}+\frac{mn}{2}-\frac{nk}{2}-\frac{n}{2}-m\\
&=\frac{n}{n^2-n-2m-(n-2)k+2}-\frac{(n-2)k}{2}+\frac{mn}{2}-\frac{n}{2}-m\\
&=\frac{n}{n^2-n-2m-(n-2)(\frac{n^2-n-2m}{n-2}-\epsilon)+2}-\frac{(n-2)(\frac{n^2-n-2m}{n-2}-\epsilon)}{2}+\frac{mn}{2}-\frac{n}{2}-m\\
&=\frac{n}{(n-2)\epsilon+2}-\frac{n^2-n-2m-(n-2)\epsilon}{2}+\frac{mn}{2}-\frac{n}{2}-m\\
&=\frac{n}{(n-2)\epsilon+2}+\frac{mn-n^2+(n-2)\epsilon}{2}.
\end{align*}

Note that achieving the second equality in (\ref{ie1}) requirs $\Omega_1=\Omega_2=\ldots=\Omega_k=1$, $\Omega_{k+1}=\Theta$ and $\Omega_{k+2}=\Omega_{k+3}=\ldots=\Omega_m=\frac{2}{n}$. Actually, as shown below, the requirement can be satisfied only when $G$ is a tree or $G=K_n$.
For convenience, we distinguish the following three cases.

\textit{Case 1.} $k=m-1$. In this case, $\theta=n-1-(m-1)=n-m$. Together with the fact $1\geq\theta\geq \frac{2}{n}>0$, we have $\theta=1$. Hence $m=n-1$ and $G$ is a tree.

\textit{Case 2.} $1\leq k\leq m-2$. By $\Omega_1=\Omega_2=\ldots=\Omega_k=1$, we know that $e_1,e_2,\ldots,e_k$ are cut edges of $G$. By Lemma \ref{day}, for any two vertices $i,j\in V(G)$, $\Omega_{G}(i,j)>\frac{2}{n}$. But this clearly contradicts the fact that $\Omega_{k+2}=\Omega_{k+3}=\ldots=\Omega_m=\frac{2}{n}$. Hence in this case, there does not exist any graph satisfying the desired condition.

\textit{Case 3.} $k=0$. In this case, $\Omega_1=\theta=n-1-\frac{2}{n}(m-1)$ and $\Omega_{2}=\Omega_{3}=\ldots=\Omega_m=\frac{2}{n}$. Now we show that $m=\frac{n(n-1)}{2}$. Suppose to the contrary that $m<\frac{n(n-1)}{2}$. Then $G$ is not complete. So on one hand, by Lemma \ref{2n}, there are at most $\frac{n^2-5n+8}{2}$ pairs of vertices with resistance distances being equal to $\frac{2}{n}$; on the other hand, since it is readily verified that $m> \frac{n(n-2)}{2}+1$ according to $\Omega_1=\theta=n-1-\frac{2}{n}(m-1)<1$, there are $m-1>\frac{n^2-2n}{2}$ pairs of vertices in $G$ with resistance distances being equal to $\frac{2}{n}$. But this is an obvious contradiction since $\frac{n^2-2n}{2}> \frac{n^2-5n+8}{2}$. Thus $m=\frac{n(n-1)}{2}$ and $G$ is complete.
\end{proof}

As the upper bound in Theorem \ref{bound} is somewhat complicated, in the following we give a much simpler upper bound by throwing $\epsilon$ off.

\begin{cor}\label{bouud1}
Let $G$ be a connected graph with $n$ vertices ($n\geq 3$) and $m$ edges. Then
\begin{equation}\label{e6}
C(G)\leq \frac{n(m-n+1)}{2},
\end{equation}
with equality if and only if $G$ is a tree or $G=K_n$.
\end{cor}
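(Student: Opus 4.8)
The plan is to bypass the intricate bound of Theorem~\ref{bound} and argue directly, since the cleanest route to an estimate of the shape $\frac{n(m-n+1)}{2}$ is to apply a single chord inequality edge-by-edge and then invoke Foster's formula. Label the edges $e_1,\dots,e_m$ and write $\Omega_i$ for the resistance distance across $e_i$; as recorded in the set $S$ of Theorem~\ref{bound}, each of these satisfies $\frac2n\le\Omega_i\le1$ (the upper bound because $\Omega_G\le d_G$ with $d_G=1$ for adjacent vertices, the lower bound because the resistance between any two vertices of a connected $n$-vertex graph is at least $\frac2n$). Foster's first formula gives $\sum_{i=1}^m\Omega_i=n-1$, and this is all the input the inequality needs.

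The key step will be a convexity (secant-line) estimate. Since $1/x$ is strictly convex, on $[\frac2n,1]$ it lies on or below the secant line through its endpoint values (namely $\frac n2$ at $x=\frac2n$ and $1$ at $x=1$); computing this line gives
\begin{equation*}
\frac1{\Omega_i}\le\frac{n+2}{2}-\frac n2\,\Omega_i,\qquad i=1,\dots,m,
\end{equation*}
with equality exactly when $\Omega_i\in\{\frac2n,1\}$. Summing over all edges and substituting $\sum_i\Omega_i=n-1$ collapses the right-hand side to $\frac{m(n+2)}{2}-\frac{n(n-1)}{2}$; subtracting $m$ as in $C(G)=\sum_i\frac1{\Omega_i}-m$ yields exactly $\frac{n(m-n+1)}{2}$, the asserted bound.

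For the equality discussion the displayed inequality forces $\Omega_i\in\{\frac2n,1\}$ for every edge. The decisive observation is that, for adjacent $i,j$, one has $\Omega_G(i,j)=1$ precisely when $ij$ is a cut edge (a unique connecting path), while Lemma~\ref{day} says that the presence of any cut edge pushes \emph{all} resistance distances strictly above $\frac2n$. Hence the two values cannot coexist: if $G$ has a cut edge then no edge attains $\frac2n$, so every edge has $\Omega_i=1$, every edge is a bridge, and $G$ is a tree; if $G$ has no cut edge then no edge attains $1$, so every edge has $\Omega_i=\frac2n$, and Foster's formula forces $m\cdot\frac2n=n-1$, i.e. $m=\binom n2$, so the simple graph $G$ must be $K_n$. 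Conversely $C(T_n)=0=\frac{n(m-n+1)}{2}$ and $C(K_n)=\frac{n(n-1)(n-2)}{4}=\frac{n(m-n+1)}{2}$, confirming both cases.

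I expect the genuine content to lie in the equality characterization rather than the inequality: one must rule out ``mixed'' graphs carrying both unit-resistance and $\frac2n$-resistance edges, which is exactly what Lemma~\ref{day} supplies. As an alternative that honours the corollary framing, one may instead start from $C(G)\le U(\epsilon)$ in Theorem~\ref{bound}; writing $t=(n-2)\epsilon\in[0,n-2)$, the surviving $\epsilon$-terms form $g(t)=\frac n{t+2}+\frac t2$, which is strictly convex with $g(0)=g(n-2)=\frac n2$, so $U(\epsilon)\le U(0)=\frac{n(m-n+1)}{2}$, and since $\epsilon=0$ for both trees and $K_n$ the two tightness conditions coincide.
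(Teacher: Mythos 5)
Your proof is correct, and it takes a genuinely different route from the paper's. The paper derives the corollary from Theorem~\ref{bound}: it views the majorization upper bound as a function $f(\epsilon)$ on $[0,1)$, finds the interior critical point $\epsilon=\frac{\sqrt{2n}-2}{n-2}$, and concludes the maximum is at $\epsilon=0$; the equality characterization is then inherited from the three-case equality analysis of Theorem~\ref{bound}, which invokes both Lemma~\ref{day} and Lemma~\ref{2n}. You bypass Theorem~\ref{bound} entirely: the secant estimate $1/\Omega_i\le\frac{n+2}{2}-\frac{n}{2}\Omega_i$ on $[\frac{2}{n},1]$ combined with Foster's formula gives the bound in two lines, and your equality analysis needs only the bridge characterization of $\Omega_G(i,j)=1$ together with Lemma~\ref{day}; Lemma~\ref{2n} becomes unnecessary because once every $\Omega_i=\frac{2}{n}$, Foster's formula alone forces $m=\binom{n}{2}$ and hence $G=K_n$. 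Your argument is more elementary and self-contained, at the cost of not yielding the sharper $\epsilon$-dependent bound that the paper gets en route. Your parenthetical second route (writing $g(t)=\frac{n}{t+2}+\frac{t}{2}$, which is convex with $g(0)=g(n-2)=\frac{n}{2}$) is in fact a cleaner justification of the paper's own calculus step: as written, the paper only compares $f(0)$ with the value at the interior critical point, which is the minimum of a convex function, and does not address the behaviour as $\epsilon\to1^{-}$; your observation that the two endpoint values coincide closes that gap.
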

\begin{proof}
By Theorem \ref{bound}, we have
$$C(G)\leq \frac{n}{(n-2)\epsilon+2}+\frac{mn-n^2+(n-2)\epsilon}{2},$$
where $\epsilon=\frac{n^2-n-2m}{n-2}-\lfloor\frac{n^2-n-2m}{n-2}\rfloor$. Clearly, $\epsilon\in [0,1)$. Now define the function $f$ of $x$ as
$$f(x)=\frac{n}{(n-2)x+2}+\frac{mn-n^2+(n-2)x}{2}.$$
In what follows, we will show that for real variable $x\in [0,1)$, $f(x)$ attains its maximum value at $x=0$,
which thus will lead to $C(G)\leq \frac{n}{2}+\frac{mn-n^2}{2}=\frac{n(m-n+1)}{2}$ as desired. To this end, consider the first derivative of $f$ with respect to $x$,
$$f'(x)=\frac{n(n-2)}{-[(n-2)x+2]^2}+\frac{n-2}{2}.$$
Solving $f'(x)=0$, we have $x=\frac{\sqrt{2n}-2}{n-2}.$ The only thing left is to compare $f(0)$ with $f(\frac{\sqrt{2n}-2}{n-2})$.
\begin{align*}
f(0)-f(\frac{\sqrt{2n}-2}{n-2})&=\frac{n}{2}+\frac{mn-n^2}{2}-(\frac{n}{\sqrt{2n}-2+2}+\frac{mn-n^2+\sqrt{2n}-2}{2})\\
&=\frac{n}{2}-\sqrt{2n}+1>0.
\end{align*}
Hence $f(x)$ attains its maximum value at $x=0$.

In view of the fact that the value of $\epsilon$ corresponding to $G$ is 0 if $G$ is a tree or $G=K_n$, we could conclude that the equality in (\ref{e6}) holds if and only if $G$ is a tree or $G=K_n$.
\end{proof}

Noticing that $m-n+1$ is none other than the cyclomatic number $\mu(G)$, Theorem \ref{bound} and Corollay \ref{bouud1} yield a nice relationship between $C(G)$ and $\mu(G)$.
\begin{thm}\label{cu}
For a connected graph $G$ with $n$ vertices and $m$ edges, we have
\begin{equation}
\frac{m}{n-1}\mu(G)\leq C(G)\leq \frac{n}{2}\mu(G).
\end{equation}
The first equality holds if and only if $G$ is electrically-edge-equivalent, and the second does if and only if $G$ is a tree or $G=K_n$.
\end{thm}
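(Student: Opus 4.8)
The plan is to recognize that this theorem is nothing more than a repackaging of the two bounds already in hand, rewritten in terms of the cyclomatic number. The narrative immediately preceding the statement has already flagged the only substantive observation, namely that $m-n+1$ is precisely $\mu(G)$. So I would not attempt to re-derive anything; instead I would transport the inequalities of Theorem \ref{bound} (lower side) and Corollary \ref{bouud1} (upper side) through the notational substitution $\mu(G)=m-n+1$, and carry the equality characterizations over verbatim.

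Concretely, I would first invoke Theorem \ref{bound}, which yields $C(G)\geq \frac{m(m-n+1)}{n-1}$. Factoring out the quantity $m-n+1$ and replacing it by $\mu(G)$ gives the left-hand inequality $\frac{m}{n-1}\mu(G)\leq C(G)$. Since the substitution is purely formal, the equality condition of Theorem \ref{bound}, that $G$ be electrically-edge-equivalent, is inherited without change. I would then invoke Corollary \ref{bouud1}, which yields $C(G)\leq \frac{n(m-n+1)}{2}$; the same factoring produces the right-hand inequality $C(G)\leq \frac{n}{2}\mu(G)$, with its equality case ($G$ a tree or $G=K_n$) likewise carried over directly.

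I expect no genuine obstacle here, since both inequalities and both equality descriptions have already been proved in full; the work that would ordinarily constitute the difficulty, in particular the majorization argument establishing the extremal configurations of the Foster-constrained resistance vector and the case analysis ruling out all graphs other than trees and $K_n$ at the upper extreme, has been discharged in the proofs of Theorem \ref{bound} and Corollary \ref{bouud1}. Thus the entire argument collapses to the single observation $m-n+1=\mu(G)$ substituted into each of the two prior results, and the theorem follows at once.
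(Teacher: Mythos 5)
Your proposal is correct and matches the paper exactly: the paper derives Theorem \ref{cu} precisely by noting that $m-n+1=\mu(G)$ and substituting into the lower bound of Theorem \ref{bound} and the upper bound of Corollary \ref{bouud1}, with the equality characterizations inherited verbatim. No further argument is given or needed.
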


\section{Nordhaus-gaddum-type results}
A Nordhaus-Gaddum-type result is a (tight) lower or upper bound on the sum or product of a parameter of a graph and its complement \cite{ng}. In this section, we will consider Nordhaus-Gaddum-type results for the global cyclicity index. There is only one connected
graph $P_4$ on 4 vertices with the connected complement $\overline{P_4}=P_4$. Since $C(P_4)=0$, we have $C(P_4)+C(\overline{P_4})=C(P_4)C(\overline{P_4})=0$.
\begin{thm}\label{ng}
Let $G$ be a connected (molecular) graph on $n\geq 5$ vertices with
a connected $\overline{G}$. Then
\begin{equation}
\frac{n(n-1)(n-4)}{8}\leq C(G)+C(\overline{G})< \frac{n(n-1)(n-4)}{4},
\end{equation}
with equality if and only if $|E(G)|=|E(\overline{G})|$ and both $G$ and $\overline{G}$ are electrically-edge-equivalent.
\end{thm}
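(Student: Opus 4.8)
The plan is to derive both bounds directly from Theorem \ref{cu} applied separately to $G$ and to $\overline{G}$, and then to combine the two estimates using only the edge-count identity $m+\overline{m}=\frac{n(n-1)}{2}$, where I write $m=|E(G)|$ and $\overline{m}=|E(\overline{G})|$. Throughout I use $\mu(G)=m-n+1$ and $\mu(\overline{G})=\overline{m}-n+1$, so that the two cyclomatic numbers sum to $m+\overline{m}-2(n-1)$.

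For the lower bound I would start from the left-hand inequality of Theorem \ref{cu},
$$C(G)\geq\frac{m(m-n+1)}{n-1},\qquad C(\overline{G})\geq\frac{\overline{m}(\overline{m}-n+1)}{n-1},$$
and add them to obtain
$$C(G)+C(\overline{G})\geq\frac{m^2+\overline{m}^2-(n-1)(m+\overline{m})}{n-1}.$$
Since $m+\overline{m}=\frac{n(n-1)}{2}$ is fixed, I would apply the elementary convexity estimate $m^2+\overline{m}^2\geq\frac{(m+\overline{m})^2}{2}$; substituting the value of $m+\overline{m}$ reduces the numerator to $\frac{n(n-1)^2(n-4)}{8}$, and dividing by $(n-1)$ gives the claimed lower bound $\frac{n(n-1)(n-4)}{8}$. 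For the upper bound I would instead use the right-hand inequality of Theorem \ref{cu} (equivalently Corollary \ref{bouud1}), namely $C(G)\leq\frac{n}{2}(m-n+1)$ and $C(\overline{G})\leq\frac{n}{2}(\overline{m}-n+1)$; summing yields $C(G)+C(\overline{G})\leq\frac{n}{2}\bigl[(m+\overline{m})-2(n-1)\bigr]$, and $m+\overline{m}=\frac{n(n-1)}{2}$ collapses the bracket to $\frac{(n-1)(n-4)}{2}$, producing $\frac{n(n-1)(n-4)}{4}$.

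It remains to analyze equality. In the lower-bound chain there are exactly two places where equality can fail: the per-graph bounds of Theorem \ref{cu} are tight precisely when $G$ and $\overline{G}$ are both electrically-edge-equivalent, and the convexity step is tight precisely when $m=\overline{m}$, i.e.\ $|E(G)|=|E(\overline{G})|$. Tracking equality through each step shows the combined inequality is an equality if and only if both conditions hold simultaneously, which is exactly the stated characterization.

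The single genuinely delicate point, which I expect to be the main obstacle, is establishing that the upper inequality is \emph{strict} for $n\geq 5$. By Corollary \ref{bouud1}, $C(G)=\frac{n}{2}\mu(G)$ forces $G$ to be a tree or $K_n$, and the same holds for $\overline{G}$; equality in the summed bound would therefore require each of $G,\overline{G}$ to be a tree or complete at the same time. Here I would exploit the hypotheses: since $\overline{G}$ is connected, $G\neq K_n$, and since $G$ is connected, $\overline{G}\neq K_n$, so both graphs would have to be trees. That forces $m=\overline{m}=n-1$, whence $2(n-1)=\frac{n(n-1)}{2}$, i.e.\ $n=4$, contradicting $n\geq 5$. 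Hence equality in the upper bound is impossible and the inequality is strict; the remaining computations are routine algebra.
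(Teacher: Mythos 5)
Your proposal is correct and follows essentially the same route as the paper: the lower bound by summing the two instances of $C(G)\geq\frac{m(m-n+1)}{n-1}$ and applying the convexity estimate $m^2+\overline{m}^2\geq\frac{(m+\overline{m})^2}{2}$ with equality iff $m=\overline{m}$ and both graphs electrically-edge-equivalent, and the upper bound by summing Corollary \ref{bouud1} and observing that neither graph can be complete (connected complement) nor can both be trees for $n\geq 5$, so the inequality is strict. Your strictness argument is, if anything, slightly more explicit than the paper's.
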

\begin{proof}
Suppose that $|E(G)|=m$. Then $|E(\overline{G})|=\frac{n(n-1)}{2}-m$. According to the lower bound in Theorem \ref{bound}, we have
\begin{align*}
C(G)+C(\overline{G})&\geq \frac{m^2}{n-1}-m+\frac{[\frac{n(n-1)}{2}-m]^2}{n-1}-[\frac{n(n-1)}{2}-m]\\
&=\frac{m^2+[\frac{n(n-1)}{2}-m]^2}{n-1}-\frac{n(n-1)}{2}\\
&\geq \frac{2[\frac{n(n-1)}{4}]^2}{n-1}-\frac{n(n-1)}{2}=\frac{n(n-1)(n-4)}{8}.
\end{align*}
Equality holds if and only if $C(G)=\frac{m^2}{n-1}-m$, $C(\overline{G})=\frac{[\frac{n(n-1)}{2}-m]^2}{n-1}-[\frac{n(n-1)}{2}-m]$, and $m=\frac{n(n-1)}{2}-m$, that is, $|E(G)|=|E(\overline{G})|$ and both $G$ and $\overline{G}$ are electrically-edge-equivalent.

For the upper bound, since neither $G$ nor $\overline{G}$ is complete and it is impossible that both $G$ and $\overline{G}$ are trees, by Corollary \ref{bouud1}, we have
\begin{align*}
C(G)+C(\overline{G})&< \frac{n(m-n+1)}{2}+\frac{n(\frac{n(n-1)}{2}-m-n+1)}{2}\\
&=\frac{n(\frac{n(n-1)}{2}-2n+2)}{2}=\frac{n(n-1)(n-4)}{4}.
\end{align*}
\end{proof}

It is interesting to note for any two connected graphs $G_1$ and $G_2$ of the same order, $C(G_1)+C(\overline{G_1})$ is no more than twice of $C(G_2)+C(\overline{G_2})$ and no less than half of $C(G_2)+C(\overline{G_2})$, though $C(G_1)$ and $C(G_2)$ may different very much.

\textit{Remark 1.} Graphs satisfying $|E(G)|=|E(\overline{G})|$ and both $G$ and $\overline{G}$ are electrically-edge-equivalent do exist. An typical example is the famous Paley graph. For a prime power $q\equiv 1$ (mod 4), a Paley graph $\mathrm{P}_q$ is the graph with vertices the elements of the finite field $\mathbf{F}_q$ and an edge between $x$ and $y$ if and only if
$x-y$ is a non-zero square in $\mathbf{F}_q$. Paley graphs satisfy the desired conditions since they are self-complementary and edge-transitive \cite{bo}.

\textit{Remark 2.} Although the upper bound in Theorem \ref{ng} is not tight, it is asymptotically tight. To see this, let us consider graphs $G$ and $\overline{G}$ as shown in Figure 1. It is readily seen that the subgraph induced by vertices $3,4,\ldots,n$ is the complete graph $K_{n-2}$. Hence $C(\overline{G})$ is equal to the global cyclicity number of the complete graph $K_{n-2}$, that is,
$$C(\overline{G})=\frac{(n-2)(n-3)(n-4)}{4}.$$ Thus
$$C(G)+C(\overline{G})>\frac{(n-2)(n-3)(n-4)}{4}.$$
Hence we could conclude that the upper bound is asymptotically tight.

\begin{figure}[h]
\begin{center}
\includegraphics[scale=0.60
]{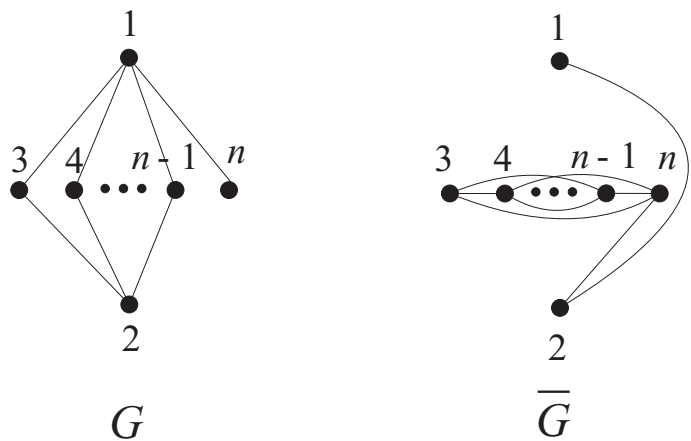}\caption{Illustration of graphs $G$ and $\overline{G}$ in Remark 2.}
\end{center}
\end{figure}

We complete this section by giving lower and upper bounds for the product of $C(G)$ and $C(\overline{G})$.
\begin{thm}
Let $G$ be a connected (molecular) graph on $n\geq 5$ vertices with
a connected $\overline{G}$. Then
\begin{equation}
0\leq C(G)C(\overline{G})<\big(\frac{n(n-1)(n-4)}{8}\big)^2,
\end{equation}
with equality if and only if $G$ or $\overline{G}$ is a tree.
\end{thm}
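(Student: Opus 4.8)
The plan is to treat the two bounds separately, leaning on the nonnegativity of the index and on the Nordhaus--Gaddum sum bound already established in Theorem~\ref{ng}. For the lower bound I would first invoke the earlier result that $C(H)\geq 0$ for every connected graph $H$, with equality precisely when $H$ is a tree. Since both $G$ and $\overline{G}$ are connected by hypothesis, we have $C(G)\geq 0$ and $C(\overline{G})\geq 0$, so the product is nonnegative. The product vanishes exactly when one of the factors is zero, that is, when $C(G)=0$ or $C(\overline{G})=0$, which by the tree-characterization means $G$ is a tree or $\overline{G}$ is a tree. (Under the standing hypothesis $n\geq 5$ at most one of them can be a tree, but this is not needed for the equality statement.)

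For the upper bound the key observation is that the product of two nonnegative reals is controlled by their arithmetic mean via the AM--GM inequality:
\[
C(G)\,C(\overline{G})\;\leq\;\Bigl(\frac{C(G)+C(\overline{G})}{2}\Bigr)^{2}.
\]
I would then substitute the strict sum bound from Theorem~\ref{ng}, namely $C(G)+C(\overline{G})<\frac{n(n-1)(n-4)}{4}$, so that $\tfrac{1}{2}\bigl(C(G)+C(\overline{G})\bigr)<\frac{n(n-1)(n-4)}{8}$. Since both sides of this last inequality are nonnegative and the right-hand side is strictly positive for $n\geq 5$, squaring preserves the strict inequality and yields $C(G)\,C(\overline{G})<\bigl(\frac{n(n-1)(n-4)}{8}\bigr)^{2}$, as desired.

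The argument is short because all the analytic work has already been done in Theorem~\ref{ng}; the only genuine ingredient here is the passage from a bound on a sum to a bound on a product. The one point that deserves care---and is the closest thing to an obstacle---is the strictness: AM--GM by itself gives only a non-strict inequality, so the strict ``$<$'' must come entirely from the strict sum bound of Theorem~\ref{ng}, which in turn relies on the fact that for $n\geq 5$ neither $G$ nor $\overline{G}$ is complete and they cannot both be trees. I would therefore emphasize that squaring a strict inequality between nonnegative quantities is legitimate here precisely because $t\mapsto t^{2}$ is strictly increasing on $[0,\infty)$, so no equality case can creep in through the final step.
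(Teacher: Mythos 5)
Your proposal is correct, and for the upper bound it takes a genuinely different (and slightly slicker) route than the paper. The paper does not pass through Theorem~\ref{ng} at all: it bounds each factor separately by Corollary~\ref{bouud1}, writing $C(G)C(\overline{G})<\frac{n(m-n+1)}{2}\cdot\frac{n(\frac{n(n-1)}{2}-m-n+1)}{2}$, and then maximizes the resulting quadratic in $m$ by completing the square, the maximum occurring at $m=\frac{n(n-1)}{4}$. You instead reuse the already-proved sum bound $C(G)+C(\overline{G})<\frac{n(n-1)(n-4)}{4}$ and convert it to a product bound via AM--GM; since $t\mapsto t^2$ is strictly increasing on $[0,\infty)$ and the bound $\frac{n(n-1)(n-4)}{8}$ is positive for $n\geq 5$, the strictness survives, exactly as you say. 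The two computations are dual to one another --- the product of the two individual bounds is maximized precisely when they are equal, which is the same condition $m=\frac{n(n-1)}{4}$ that maximizes the paper's quadratic --- so it is no accident that both yield the same constant. Your version is shorter and makes the logical dependence on the Nordhaus--Gaddum sum result explicit; the paper's is self-contained modulo Corollary~\ref{bouud1}. Incidentally, your chain also sidesteps a small blemish in the paper's first displayed step: when $G$ is a tree both sides of the paper's initial strict inequality equal $0$, so that intermediate ``$<$'' should really be ``$\leq$'' (the final strict bound is unaffected); in your argument the strictness is injected only at the last step, where it is genuinely available. The lower bound and its equality characterization are handled the same way in both arguments.
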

\begin{proof}
Since the global cyclicity index of a tree is 0, the lower bound is trivial. For the upper bound, suppose that $|E(G)|=m$. Then by Corollary \ref{bouud1}, we have
\begin{align*}
C(G)C(\overline{G})&< \frac{n(m-n+1)}{2}\times\frac{n(\frac{n(n-1)}{2}-m-n+1)}{2}\\
&=\frac{n^2}{4}\big[\frac{n(n-1)}{2}m-m^2-\frac{n(n-1)^2}{2}+(n-1)^2\big]\\
&=\frac{n^2}{4}\big[-\big(m-\frac{n(n-1)}{4}\big)^2+\big(\frac{n(n-1)}{4}\big)^2-\frac{n(n-1)^2}{2}+(n-1)^2\big]\\
&\leq \frac{n^2}{4}\big[\big(\frac{n(n-1)}{4}\big)^2-\frac{n(n-1)^2}{2}+(n-1)^2\big]\\
&=\big(\frac{n(n-1)(n-4)}{8}\big)^2.
\end{align*}
\end{proof}

\section{Acknowledgement}
The author acknowledges the support of National Natural Science Foundation of China under Grant No.11201404, China Postdoctoral Science Foundation under Grant No.2012M521318, Special Funds for Postdoctoral Innovative Projects of Shandong Province under Grant No.201203056.

\end{document}